\newtheorem{thm}{Theorem}[section]
\newtheorem{lemma}[thm]{Lemma}
\newtheorem{cor}{Corollary}[section]
\newcommand{\maps}{\rightarrow}
\newcommand{\intersect}{\cap}
\newcommand{\be}{\beta}
\newcommand{\del}{\delta}
\newcommand{\ep}{\epsilon}
\newcommand{\lam}{\lambda}
\newcommand{\x}{{\bf x}}
\newcommand{\R}{\mathbb{R}}
\newcommand{\beq}{\begin{equation}}
\newcommand{\eeq}{\end{equation}}
\numberwithin{equation}{section}
\newcommand \mo{\mathcal{O}}
\newcommand \e{\varepsilon}
\newcommand \mathp{\mathcal{P}}
\title[]{Lower bounds for the truncated\\ hilbert transform}
\author{Rima Alaifari}
\address[Rima Alaifari]{ Department of Mathematics, ETH Z\"{u}rich, R\"{a}mistrasse 101, 8092 Z\"{u}rich, Switzerland}
\email{rima.alaifari@math.ethz.ch}
\author{Lillian B. Pierce}
\address[Lillian B. Pierce]{Department of Mathematics, Duke University, 120 Science Drive, Durham NC 27708, and Hausdorff Center for Mathematics, Endenicher Allee 62, Bonn, Germany}
\email{pierce@math.duke.edu and pierce@math.uni-bonn.de}
\author{Stefan Steinerberger}
\address[Stefan Steinerberger]{Department of Mathematics, Yale University, 10 Hillhouse Avenue, 06511 CT, USA}
\email{stefan.steinerberger@yale.edu}
\begin{document}
\begin{abstract}
Given two intervals $I, J \subset \mathbb{R}$,
 we ask whether it is possible to reconstruct a 
real-valued function
$f \in L^2(I)$ from knowing its Hilbert transform $Hf$ on $J$. 
When neither interval is fully contained in the other, this problem has a unique answer (the nullspace is trivial) but is severely ill-posed. We isolate the difficulty and show that by restricting $f$ to functions with controlled total variation, reconstruction becomes stable. In particular, for functions $f \in H^1(I)$, we show that
$$  \|Hf\|_{L^2(J)} \geq c_1 \exp{\left(-c_2 \frac{\|f_x\|_{L^2(I)}}{\|f\|_{L^2(I)}}\right)} \| f \|_{L^2(I)} ,$$
for some constants $c_1, c_2 > 0$ depending only on $I, J$. This inequality is sharp, but we conjecture that
$\|f_x\|_{L^2(I)}$ can be replaced by $\|f_x\|_{L^1(I)}$.
\end{abstract}
\maketitle

\section{Introduction and Motivation}
\subsection{Hilbert transform.} The Hilbert transform $H:L^2(\mathbb{R}) \rightarrow L^2(\mathbb{R})$ is a well-studied unitary operator given by
$$ (Hf)(x) = \frac{1}{\pi}\mbox{p.v.}\int_{\mathbb{R}}{\frac{f(y)}{x-y}dy},$$
where p.v. indicates that the integral is to be understood as a principal value. On $L^2(\mathbb{R})$ it can
alternatively be defined via the Fourier multiplier $-i \mbox{sgn}(\xi)$. The Hilbert transform appears naturally 
in many different settings in pure and applied mathematics. In particular, it plays an important role in the mathematical
study of inverse problems arising in medical imaging (see \S \ref{sec_medim}), which motivates the following fundamental question:\\

\begin{quote}
\textbf{Inversion problem.} Given two finite intervals $I, J \subset \mathbb{R}$ and a real-valued function $f \in L^2(I)$, 
when can $f$ be reconstructed from knowing $Hf$ on $J$? \\
\end{quote}

\begin{center}
\begin{figure}[h!]
\begin{tikzpicture}[xscale=7,yscale=1.5]
\draw [domain=0:1, samples = 300] plot (\x, {-0.15269*sin(2*pi*\x r)    +    0.4830*sin(3*pi*\x r)  +  0.3084*sin(4*pi*\x r)  + 0.80509*sin(5*pi*\x r)}  );
\draw [domain=0:1] plot (\x, {0}  );
\end{tikzpicture}
\caption{A function $f$ on $[0,1]$ with $\|Hf\|_{L^2([2,3])} \sim 10^{-7}\|f\|_{L^2([0,1])}$.} 
\end{figure}
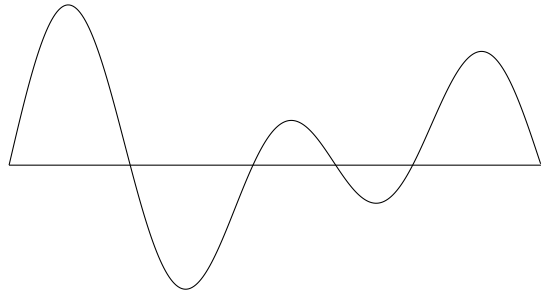
\end{center}

For illustration, let us consider first the particular case in which the intervals $I$ and $J$ are disjoint.
The Hilbert transform is an integral operator --- thus if a function $f$ is compactly supported and we consider the Hilbert transform
$Hf$ only outside of that support, the singularity of the kernel never plays a role and the operator is compact (i.e. smoothing). It
is clear from basic principles in functional analysis that the inversion of a compact operator will not yield a bounded operator.

\subsection{The phenomenon in practice} Let us understand just how ill-posed the problem actually is. Consider the Hilbert transform applied to functions with support on $I = [0,1]$ and then
take its restriction on $J=[2,3]$, leading to the truncated Hilbert transform
$$ H_T = \chi_{[2,3]}H(\chi_{[0,1]}f).$$
The operator $H_T:L^2(\mathbb{R}) \rightarrow L^2(\mathbb{R})$ is a compact integral operator.
As a consequence of it being compact,  for every $\varepsilon > 0$
we can find a function $f \in L^2([0,1])$ with
$$ \|H_T f\|_{L^2([2,3])} \leq \varepsilon \| f\|_{L^2([0,1])}.$$
Such $f$ are actually very easy to find: let $\phi_1, \phi_2, \dots, \phi_n$ 
denote any $n$ orthonormal real-valued functions in $L^2([0,1])$ and consider the subspace 
$$ \mathcal{S} =  \mbox{span}(\phi_1, \phi_2, \dots, \phi_n)$$ 
spanned by these functions. Then, putting
$$ f = \sum_{k=1}^{n}{a_k \phi_k}$$
for real coefficients $a_k$
immediately implies that
\begin{align*}
\|H_T f\|_{L^2([2,3])}^2 &= \int_{2}^{3}{ \left(H_T\sum_{k=1}^{n}{a_k \phi_k} \right)^2 dx} \\
&= \int_{2}^{3}{ \sum_{k,l = 1}^{n}{a_k a_l (H_T\phi_k) (H_T \phi_l)}dx} \\
&=  \sum_{k,l = 1}^{n}{a_k a_l \int_{2}^{3}{  (H_T\phi_k) (H_T \phi_l)dx}}.
\end{align*}
This yields that the symmetric $n \times n$ matrix
$$ A = \left( \int_{2}^{3}{  (H_T\phi_k) (H_T \phi_l)dx} \right)_{k,l = 1}^{n} $$
will satisfy the relation
$$ \inf_{f \in \mathcal{S}}\frac{ \|H_T f\|_{L^2([2,3])} }{ \| f\|_{L^2([0,1])}} = \sqrt{\lambda_{\mathrm{min}}(A)},$$
where $\lam_{\mathrm{min}}(A)$ denotes the least eigenvalue of $A$.
Put differently, $H_T: \mathcal{S} \rightarrow H_T(\mathcal{S})$ may be a bijection but the inverse operator is very sensitive to noise
in the measurement because
$$ \| H_T^{-1}\|_{H_T(\mathcal{S}) \rightarrow \mathcal{S}} = \frac{1}{\sqrt{\lambda_{\mathrm{min}}(A)}} \qquad \mbox{will be very big.}$$
So far, this discussion could apply to a wide class of operators; focusing on our situation, the key point is that the spectrum of $H_T$ is rapidly decaying and therefore $\lambda_{\mathrm{min}}(A)$
will always be very small, independent of the $n$ orthonormal functions we pick; indeed (see equation (\ref{signupbd})), there exist real constants $C, \be>0$ dependent only on the intervals $I,J$ such that
$$\lambda_{\mathrm{min}}(A) \leq C e^{-\beta n}.$$

\subsection{An explicit example.} We consider a numerical example; let $I = [0,1]$, $J=[2,3]$
and consider the subspace spanned by
$$\phi_i = \sqrt{3}\chi_{[\frac{i-1}{3},\frac{i}{3}]} \qquad \mbox{for}~1 \leq i \leq 3.$$
The choice of these functions is solely motivated by the fact that $H_T \phi_i$ can be written down in
closed form, which simplifies computation. Then the matrix $A$ has the eigenvalues 
$$ \sim \left\{0.28,~ 0.00013, ~2.2 \cdot 10^{-8} \right\} \qquad \mbox{which are very rapidly decaying.}$$
As a consequence, there exists a step function $g$, which is constant on the three intervals of length $1/3$ in $[0,1]$
(and is therefore certainly quite simple) but nonetheless satisfies
$$ \|H_T g\|_{L^2([2,3])} \leq 10^{-7} \| g\|_{L^2([0,1])}.$$
It is interesting to compare this with a larger subspace. Pick now, for comparison,
$$\phi_i = \sqrt{5}\chi_{[\frac{i-1}{5},\frac{i}{5}]} \qquad \mbox{for}~1 \leq i \leq 5.$$
The smallest eigenvector of the arising matrix is $\lambda_{\mathrm{min}}(A) \leq 10^{-15},$ allowing for the
construction of a step function $h$ with
$$ \|H_T h \|_{L^2([2,3])} \leq 10^{-15} \| h\|_{L^2([0,1])}.$$
\begin{figure}[h!, scale=0.8]
\begin{minipage}[b]{0.48\linewidth}
\centering
\includegraphics[width=\textwidth]{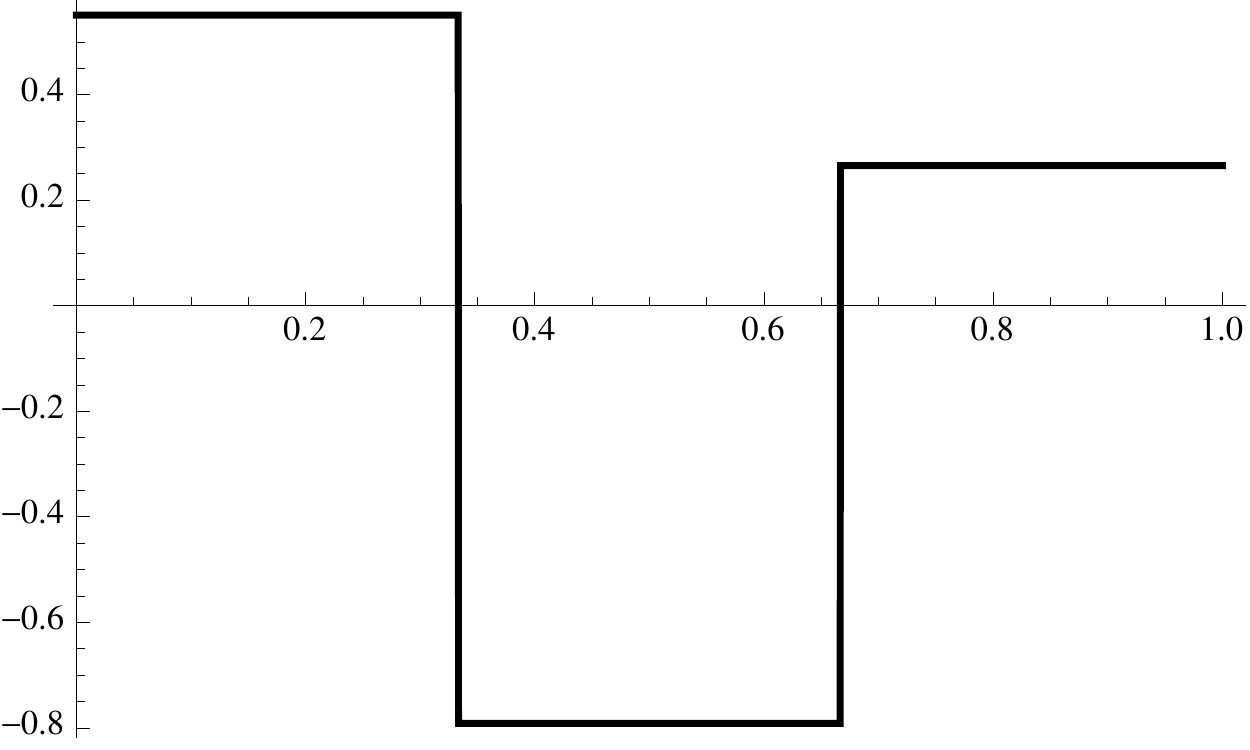}
\caption{\newline The function $g$ ($n=3$).}
\end{minipage}
\begin{minipage}[b]{0.48\linewidth}
\centering
\includegraphics[width=\textwidth]{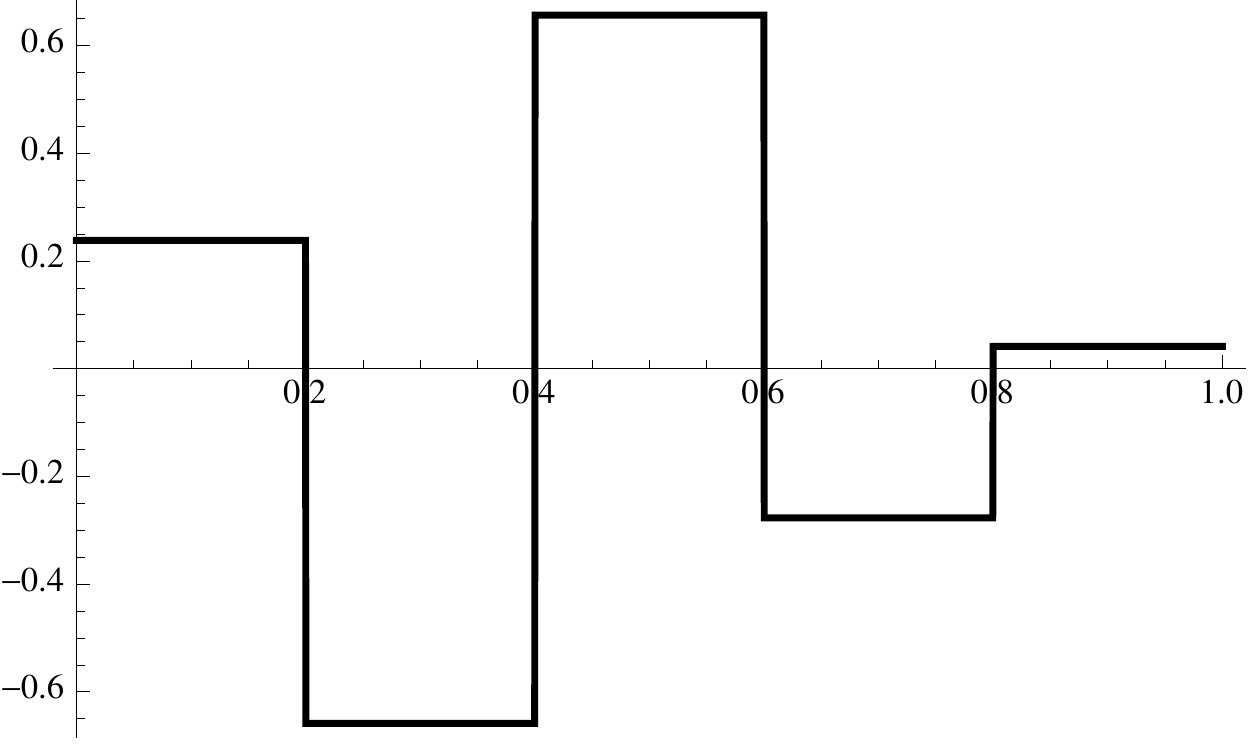}
\caption{\newline The function $h$ ($n=5$).}
\end{minipage}
\end{figure}

The contribution of our paper may now be phrased as follows: the fact that these functions are highly oscillatory
is not a coincidence; indeed, it is the purpose of this paper to point out that an
inverse inequality is true: \textit{reconstruction becomes stable for functions with controlled total variation.}
While there are a variety of techniques for understanding how to bound oscillating quantities from above (e.g. stationary phase), it is usually much 
harder to control oscillation from below -- finding sharp quantitative versions of the above statement falls precisely into this class of
problems; as such, we believe it to be very interesting. The same problem could be of great interest for more general integral
operators, where a similar phenomenon should be true generically (see Section \ref{sec_general}).

\subsection{Configurations of the intervals: four cases.} The precise nature of the problem of reconstructing a function supported on $I$ from its Hilbert transform on $J$ will depend
on the relation between $I$ and $J$. To address this question adequately, it is useful to distinguish four cases:

\begin{enumerate}
\item The Hilbert transform is known on an interval $J$ that covers the support $I$ of $f$ (that is, $I \subset J$). In this case inversion is stable (the solution operator is bounded) and an explicit inversion formula is known \cite{tri}. \\
\begin{figure}[ht!]
     \begin{center}
           \includegraphics[width=0.4\textwidth]{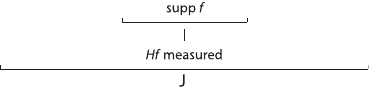}
           \end{center}
\end{figure}
\item The Hilbert transform is known only on an interval $J$ that is a subset of the support $I$ of $f$ (that is, $I \supseteq J$). In tomographic reconstruction, this case is known as the \textit{interior problem}  \cite{cour, kat2, ekat, kcnd, yyw}. \\
\begin{figure}[ht!]
     \begin{center}
           \includegraphics[width=0.4\textwidth]{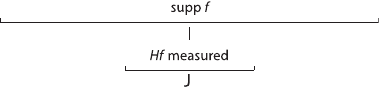}
           \end{center}
\end{figure}
\item The Hilbert transform is known only outside of the support of $f$ (that is, $I \cap J = \emptyset$). We will refer to this scenario as the truncated Hilbert transform \textit{with a gap.} The singular value decomposition of the underlying operator has been studied in \cite{kat}.\\
\begin{figure}[ht!]
     \begin{center}
           \includegraphics[width=0.4\textwidth]{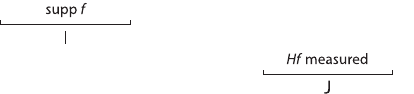}
           \end{center}
\end{figure}
\item If none of the above is the case and the Hilbert transform is known on an interval $J$ that overlaps with the support $I$ of $f,$ we call this the truncated Hilbert transform \textit{with overlap.} For this case a pointwise stability estimate has been shown in \cite{defrise}. The spectral properties of the underlying operator are the subject of \cite{reema1, reema2}.\\
\begin{figure}[ht!]
     \begin{center}
           \includegraphics[width=0.4\textwidth]{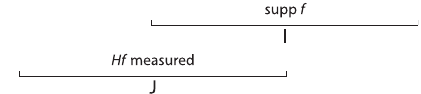}
           \end{center}
\end{figure}
\end{enumerate}

 In this paper, we consider Cases 3 and 4. For these, $f$ is supported on $I$ and $Hf$ is known on $J$, where $I$ and $J$ are non-empty finite intervals on $\mathbb{R}$, such that $I \not\supseteq J$ and $I \not\subset J.$ Let $\mathcal{P}_{\Omega}$ stand for the projection operator onto a set $\Omega \subset \R$: 
 \[ \text{$(\mathcal{P}_{\Omega} f) (x) = f(x)$ if $x \in \Omega$, $(\mathcal{P}_{\Omega} f)(x) = 0$ otherwise.}\]
  We will use the notation $H_T=\mathp_J H \mathp_I$ to denote the truncated Hilbert transform (\textit{with a gap} or \textit{with overlap}), specialized to the intervals $I$ and $J$.

\subsection{Applications in medical imaging.}\label{sec_medim}
The problem of reconstructing a function from its partially known Hilbert transform arises naturally in computerized tomography: assume a 2D or 3D object is 
illuminated from various directions by a penetrating beam (usually X-rays) and that the attenuation of the 
X-ray signals is measured by a set of detectors. Then, one seeks to reconstruct the object from the measured 
attenuation, which can be modeled as the Radon transform data of the object. If the directions along which 
the Radon transform is measured are sufficiently dense, the problem and its
solution are well-understood (cf. \cite{nat}). When the directions are not sufficiently dense the problem 
is more complicated. One such setting is the case of \textit{truncated projections} and occurs when only 
a sub-region of the object is illuminated by a sufficiently dense set of directions. Going back to a result by Gelfand \& Graev \cite{gel}, the method
of differentiated back-projection allows one to reduce the problem to solving a family of one-dimensional problems which 
consist of inverting the Hilbert transform data on a finite segment of the line. If one knew $Hf$ on all of $\mathbb{R}$, 
this would be trivial, since $H^{-1} = -H$. \\

In practice, $Hf$ is measured on only a finite segment, giving rise to the different configurations 1 through 4 and the resulting reconstruction problems. In this paper, we focus on Case 3 (the truncated Hilbert transform 
with a gap) and Case 4 (the truncated Hilbert transform 
with overlap), which are the most unstable from the point of view of functional analysis.
In fact, both these cases are \textit{severely} ill-posed, meaning that the singular values of the underlying operator decay to zero at 
an exponential rate. (For the asymptotic analysis of the singular value decomposition in Case 3 we refer to Katsevich \& Tovbis \cite{kat3}; for Case 4, see \cite{reema2}.)
In Case 3, the Hilbert transform is an integral operator with a smooth kernel
and is thus compact.
In general, one would expect Case 4 to be better behaved with respect to the inversion problem
as long as the functions have, say, a fixed proportion of their $L^2$--mass supported on $I \cap J$. By considering the subproblem arising in Case 4 when we consider functions with compact support bounded away from J, we see that all the difficulties of Case 3 must also be present in Case 4. Inverse estimates specifically tailored to Case 4, which show their strength precisely for functions not supported away from J, are presented in Section 2.4 \ref{sec:results-overlap}.

\subsection{Questions of regularity.} In order to situate our results in terms of the role of regularity, it is worth observing that the actual problem of reconstruction is \textit{not} easier for smooth functions. This is easily seen in Case 3: when $I$ and $J$ are disjoint, there is less stability of the inversion problem of the truncated Hilbert transform; in this case the truncated Hilbert transform turns into a highly regular smoothing integral operator (in contrast to the classical Hilbert transform which is the fundamental example of a singular integral operator). Indeed, when $I$ and $J$ are disjoint, the singularity of the Hilbert kernel never comes into play. This smoothing property of the truncated Hilbert transform with a gap allows one to approximate any function $f \in L^2(I)$
by $C^{\infty}$ functions $f_n$ such that $H_T f_n \rightarrow H_T f$ in $L^{\infty}(J)$. This can be seen from 
$$\| H_T f_n - H_T f\|_{L^{\infty}(J)} \leq \tilde{c} \| f_n - f\|_{L^1(I)} \leq c \| f_n - f\|_{L^2(I)},$$
where $$\tilde{c} = \max_{x \in I, y \in J} \frac{1}{|y-x|}$$ and $c = \tilde{c} \cdot |I|^{1/2}.$  Yet while the problem of reconstruction is in theory no easier for smooth functions, our current methods will be able to obtain improved estimates for smooth functions (whereas any argument yielding a sharp result should be oblivious to questions of regularity).
Another classical property we will make use of is that one can always approximate a function of bounded variation by smooth functions while controlling their total variation (TV). More precisely, we have the following lemma (which we prove in \S \ref{sec_lemma1}):
\begin{lemma}\label{lemma_approx}
Given a function $f \in BV(I)$ satisfying $f(x_0)=0$ for at least one $x_0 \in I$, there exists a sequence $f_n \in C^{\infty}_{c}(I)$ such that
$$\| f_n  -  f \|_{L^2(I)} \to 0 \qquad \mbox{and} \qquad \left| f_n \right|_{TV} \leq 3\cdot \left| f \right|_{TV}.$$
\end{lemma}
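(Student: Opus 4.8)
The plan is to build $f_n$ in two moves --- first cut $f$ off near the endpoints of $I$ to force compact support, then mollify to gain smoothness --- and the whole argument hinges on one observation: the hypothesis $f(x_0)=0$ upgrades the total-variation seminorm into an $L^\infty$ bound. Writing $I=(a,b)$ and working with the good representative of $f\in BV(I)$, for every $x\in I$ one has $|f(x)|=|f(x)-f(x_0)|\le |f|_{TV}$, hence $\|f\|_{L^\infty(I)}\le |f|_{TV}$. In particular $f\in L^\infty(I)\subset L^2(I)$. This is the \emph{only} place the hypothesis enters, and it is exactly what makes the cutoff affordable; without it a large constant offset could carry small total variation while being expensive to truncate.

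First I would fix a small $\delta>0$ and pick a smooth cutoff $\chi_\delta$ with $0\le \chi_\delta\le 1$, equal to $1$ on $[a+2\delta,b-2\delta]$ and supported in $[a+\delta,b-\delta]$, chosen monotone on each transition region so that $\int_I |\chi_\delta'|\,dx = 2$. Using the Leibniz rule for the distributional derivative of a $BV$ function times a Lipschitz function and estimating each term separately,
$$ |\chi_\delta f|_{TV} \;\le\; \int_I \chi_\delta\, d|Df| \;+\; \int_I |f|\,|\chi_\delta'|\,dx \;\le\; |f|_{TV} + \|f\|_{L^\infty(I)}\int_I |\chi_\delta'|\,dx \;\le\; 3\,|f|_{TV}, $$
which is precisely where the constant $3$ is produced. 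The function $\chi_\delta f$ is now compactly supported in $I$, but only of bounded variation.

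Next I would mollify: set $g_{\delta,\varepsilon}=(\chi_\delta f)*\rho_\varepsilon$, where $\rho_\varepsilon$ is a standard nonnegative mollifier supported in $[-\varepsilon,\varepsilon]$ with integral $1$. Since convolution against a probability density does not increase total variation, $|g_{\delta,\varepsilon}|_{TV}\le |\chi_\delta f|_{TV}\le 3\,|f|_{TV}$; and as long as $\varepsilon<\delta$ the support of $g_{\delta,\varepsilon}$ is contained in $[a+\delta-\varepsilon,\,b-\delta+\varepsilon]$, a compact subset of the open interval $I$, so $g_{\delta,\varepsilon}\in C^\infty_c(I)$. For the $L^2$ convergence I would argue in two stages. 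Because $\chi_\delta f=f$ on $[a+2\delta,b-2\delta]$ and $|\chi_\delta f - f|\le |f|\le \|f\|_{L^\infty(I)}$ on the remaining set of measure $4\delta$,
$$ \|\chi_\delta f - f\|_{L^2(I)}^2 \;\le\; 4\delta\,\|f\|_{L^\infty(I)}^2 \;\longrightarrow\; 0 \qquad (\delta\to 0), $$
while for each fixed $\delta$ one has $g_{\delta,\varepsilon}\to \chi_\delta f$ in $L^2(I)$ as $\varepsilon\to 0$. A diagonal choice $\varepsilon=\varepsilon(\delta)\to 0$ with $\varepsilon(\delta)<\delta$ then furnishes $f_n:=g_{\delta_n,\varepsilon(\delta_n)}$ along any $\delta_n\to 0$, satisfying both conclusions.

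I do not expect a serious obstacle here: the mollifier estimates and the diagonal argument are routine. The single genuine idea is the reduction $\|f\|_{L^\infty(I)}\le |f|_{TV}$ supplied by the hypothesis, and the only points demanding care are performing the cutoff \emph{before} mollifying (so that the $TV$ bound is inherited through the smoothing step) and keeping $\varepsilon<\delta$ so that the support stays strictly inside $I$.
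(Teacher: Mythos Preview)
Your proof is correct and follows essentially the same strategy as the paper: both pivot on the observation $\|f\|_{L^\infty(I)}\le |f|_{TV}$ supplied by the hypothesis $f(x_0)=0$, and both obtain the constant $3=1+2$ from the variation added when forcing compact support near the two endpoints. Your explicit cutoff--mollify construction with a direct $L^2$ estimate is in fact cleaner than the paper's version, which reaches $L^2$ convergence indirectly via $L^1$ convergence, a uniform-boundedness argument, and dominated convergence.
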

We note that the condition that $f$ vanishes at least at one point in the interval will not be a significant restriction in our applications of this lemma (see Lemma \ref{polydecay}, and subsequent remarks, for example).

\textbf{Notation.} In the following, $I$ and $J$ always denote finite open intervals on $\mathbb{R}.$ We write $C_c^N(I)$ for the space of $N$-times differentiable functions compactly supported on $I$. As conventional, $H^k$ denotes the Sobolev space $W^{k,2}$, and
we recall the following well-known inclusions for a finite interval $\Omega \subset \mathbb{R}:$
$$H^1(\Omega) \subset W^{1,1}(\Omega) \subset BV(\Omega) \subset L^2(\Omega) \subset L^1(\Omega).$$

\section{Statement of results}\label{sec:results}
\subsection{Functions of bounded variation.} Our first finding establishes a stability result for functions of bounded variation. This seems to be the appropriate notion to exclude strong
oscillation while still allowing for rather rough functions with jump discontinuities. The total variation (TV) model has been studied as a regularizing constraint in computerized tomography before, see e.g. \cite{sidky}.

\begin{thm}\label{thm1} Let $I, J \subset \mathbb{R}$ be intervals in the configuration of Case 3 or Case 4 and consider functions $f \in BV(I)$ supported on $I$. There exists a positive function $h:[0, \infty) \rightarrow \mathbb{R}_{+}$ (depending only on $I,J$) such that
$$ \| H f\|_{L^2(J)} \geq h\left(\frac{\left| f \right|_{\text{TV}}}{\|f\|_{L^2(I)}}\right)\|f\|_{L^2(I)},$$
where $\left| \cdot \right|_{\text{TV}}$ denotes the total variation of $f$.
\end{thm}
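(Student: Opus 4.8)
The plan is to establish a stability result of the form $\|Hf\|_{L^2(J)} \geq h(|f|_{TV}/\|f\|_{L^2(I)})\|f\|_{L^2(I)}$ by turning the problem into a quantitative statement about functions that are simultaneously nearly annihilated by $H_T$ and of controlled total variation. Let me sketch how I would approach it.

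First, let me think about the structure. We want to show that if $\|Hf\|_{L^2(J)}$ is small relative to $\|f\|_{L^2(I)}$, then $|f|_{TV}$ must be large relative to $\|f\|_{L^2(I)}$. Equivalently, among functions with bounded ratio $|f|_{TV}/\|f\|_{L^2(I)} \leq M$, the quantity $\|Hf\|_{L^2(J)}/\|f\|_{L^2(I)}$ is bounded below by some positive $h(M)$.

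The natural approach: the singular value decomposition. We know the singular values $\sigma_n$ of $H_T$ decay exponentially (this is mentioned—Case 3 via Katsevich-Tovbis, and equation (signupbd)). Write $f = \sum_n c_n u_n$ in the SVD basis $\{u_n\}$ (right singular functions on $I$), so $\|H_T f\|_{L^2(J)}^2 = \sum_n \sigma_n^2 c_n^2$ and $\|f\|_{L^2(I)}^2 = \sum_n c_n^2$.

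The key mechanism should be: if $\|H_T f\|$ is small, then $f$ must have most of its $L^2$-mass on singular functions $u_n$ with small $\sigma_n$, i.e. large $n$. But the high-index singular functions $u_n$ are highly oscillatory. A function concentrated on high-oscillation modes must have large total variation. So bounded TV forces $f$ to live (approximately) in the span of low-index singular functions, where $\sigma_n$ is bounded below.

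Now, sketching the concrete steps.

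\smallskip

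\textbf{Step 1 (Reduction to smooth functions).} Using Lemma \ref{lemma_approx}, I would first reduce to the case $f \in C_c^\infty(I)$, since the approximation controls both $\|f_n - f\|_{L^2(I)} \to 0$ and $|f_n|_{TV} \leq 3|f|_{TV}$. This lets me work with smooth $f$ and then pass to the limit, at the cost of a harmless constant factor in the argument of $h$. (The hypothesis that $f$ vanishes at some point is essentially automatic for functions supported on $I$ with $I$ open, or can be arranged.)

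\textbf{Step 2 (A quantitative oscillation estimate).} The heart of the matter is to relate low $\|H_T f\|_{L^2(J)}$ to high oscillation, made quantitative. I would try to show a dichotomy: either the Fourier content (or SVD content) of $f$ is concentrated at low frequencies/indices, or $f$ oscillates. Concretely, I expect to prove an inequality of the shape
\begin{equation*}
\|f\|_{L^2(I)}^2 \leq A(N)\,\|H_T f\|_{L^2(J)} \cdot \|f\|_{L^2(I)} + B(N)\,|f|_{TV}\cdot\|f\|_{L^2(I)},
\end{equation*}
where the first term captures the low-index singular modes (on which $H_T$ is boundedly invertible, with norm controlled by $\sigma_N^{-1}$, hence $A(N)$ grows like $e^{\beta N}$) and the second term controls the tail of high-index modes by the total variation (with $B(N)$ a decreasing function of $N$, since bounded TV forces rapid decay of high-frequency content). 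Optimizing over the threshold $N$ then yields a bound on $\|f\|_{L^2(I)}$ in terms of $\|H_T f\|_{L^2(J)}$ and $|f|_{TV}$, which rearranges into the desired form with an explicit $h$.

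\textbf{Step 3 (Controlling the high-frequency tail by TV).} To make Step 2 work I need a clean statement that the portion of $f$ supported on high-oscillation singular functions is small in $L^2$ when $|f|_{TV}$ is bounded. The cleanest route is via the Fourier transform: for $f \in BV$, $|\widehat{f}(\xi)| \lesssim |f|_{TV}/|\xi|$, so the high-frequency $L^2$-energy is controlled. I would then need to transfer this Fourier decay into decay of the SVD coefficients $c_n$ for large $n$—this requires knowing that the high-index singular functions $u_n$ are concentrated at high frequencies, which follows from the exponential decay of $\sigma_n$ together with the analyticity/Paley–Wiener structure of $H_T$ (the singular functions are related to prolate-type functions). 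Alternatively, one can bypass the SVD entirely and argue directly via Fourier analysis and the commutation relation for $H_T$.

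\smallskip

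The main obstacle I anticipate is \textbf{Step 3}: rigorously quantifying the statement that "small $\|H_T f\|$ plus bounded TV is impossible unless $f$ is small." The difficulty is that the SVD of $H_T$ on $I,J$ is not explicit, and the link between the index $n$ of a singular function and its oscillation (frequency content) must be established carefully and uniformly. Controlling oscillation from below—as the authors themselves emphasize—is genuinely hard; the upper bounds on $\sigma_n$ give us where the instability lives, but converting this into a usable lower bound on $\|H_T f\|$ for TV-bounded $f$ is where the real work lies. I would expect to lean on the differential operator that commutes with $H_T$ (a hallmark of prolate-type problems, used in \cite{reema1, reema2, kat3}), whose eigenfunctions coincide with the singular functions and whose eigenvalues grow, thereby giving the needed quantitative frequency localization of each $u_n$.
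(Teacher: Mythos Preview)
Your approach is genuinely different from the paper's. The paper proves Theorem~\ref{thm1} by a soft, non-quantitative compactness argument: after normalizing to $\|f\|_{L^2(I)}=1$ and $|f|_{TV}\le\kappa$, one assumes for contradiction that there is a sequence $f_n$ with $\|Hf_n\|_{L^2(J)}\to 0$, shows this sequence is uniformly bounded in $L^\infty$, applies Helly's selection theorem to extract an $L^2$-convergent subsequence with limit $f$, and concludes $\|Hf\|_{L^2(J)}=0$, contradicting the known injectivity of $H_T$ (Lemma~5.1 of \cite{reema1}). No SVD, no differential operator, no explicit $h$. Your proposal, by contrast, is essentially the strategy the paper employs for the \emph{quantitative} Theorems~\ref{thm2}--\ref{thm3a}: use the commuting differential operator $L_I$, the asymptotics $\lambda_n\gtrsim n^2$ and $\sigma_n\ge e^{-k_2 n}$, and a Taibleson-type bound $|\langle f,u_n\rangle|\le c\,|f|_{TV}/n$ (your Step~3) to force at least half of the $L^2$-mass onto the first $N\sim |f|_{TV}^2$ singular modes, then invoke $\sigma_N$. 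If carried out, this gives an explicit $h$ (of the shape $c_1 e^{-c_2\kappa^2}$), which is strictly stronger than Theorem~\ref{thm1}; but it requires the machinery of \S\ref{sec:diff-operator} and the WKB analysis of the appendix, whereas the paper's actual proof of Theorem~\ref{thm1} needs only Helly's theorem and injectivity. Your Step~3 is indeed where the real work lies; note that the paper resolves it not by transferring Fourier decay to SVD coefficients, but by proving directly that $\sup_{x\in I}\bigl|\int_{a_3}^{x}u_n(z)\,dz\bigr|\le c/n$ from the WKB asymptotics of $u_n$ (Lemma~\ref{lemma_un}), and then integrating by parts.
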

We conjecture  
$$ h(\kappa) \geq c_1 e^{-c_2 \kappa}$$
for constants $c_1, c_2 > 0$ depending only on $I$ and $J$. \\

The relation between Theorem \ref{thm1} and the reconstruction problem can easily be made explicit.
In the application of computerized tomography one needs to solve $H_T f = g$ for $f$,
 given a right-hand side $g$. In practice, $g$ has to be measured and is thus never known exactly, but only up to a certain 
accuracy. Since the range of the operator $H_T$ is dense but not closed in $L^2(J)$, the inversion of $H_T$ is ill-posed, see
\cite{reema1}. As a consequence, the solution $f$ to $H_T f = g$ does not depend continuously on the right-hand side. 
In particular, small perturbations in $g$ due to measurement noise might change the solution completely, making the outcome unreliable.
Given a function $g$ representing exact data, of which we know only a noisy measurement $g^{\delta}$ 
and the noise level $\| g - g^{\delta} \|_{L^2(J)} \leq \delta$, quantitative results taking the form of Theorem \ref{thm1} will enable stable reconstruction, under the assumption
that the true solution $f_{\text{ex}}$ to $H_T f = g$ has bounded variation (see Corollary \ref{cor-stab}).

\subsection{Weakly differentiable functions.} 
We now turn our focus to proving quantitative versions of Theorem \ref{thm1} for more regular functions $f$.
For weakly differentiable functions we can actually write
$$ \left| f \right|_{\text{TV}} = \int_{I}{|f_x(x)|dx}$$
and thus identify the total variation with $\|f_x\|_{L^1(I)}$. In light of Theorem \ref{thm1}, the total variation seems to be the natural quantity with which to track the behavior of regular functions, and we conjecture that
\beq\label{thm2_ineq_conj}
 \|H f\|_{L^2(J)} \geq c_1 \exp{\left(-c_2\frac{ \|f_x\|_{L^1(I)}}{\|f\|_{L^2(I)}}\right)} \| f \|_{L^2(I)} .
 \eeq
An inequality of this form would quantify the physically intuitive notion that tomographic reconstruction is more
difficult for inhomogeneous objects with high variation in density than it is for relatively uniform objects.
Our first result toward this conjecture considers
 $\|f_x\|_{L^2(I)}$ instead, which provides access to Hilbert space techniques that allow us to prove the
following statement:
\begin{thm}\label{thm2} Let $I, J \subset \mathbb{R}$ be intervals in the configuration of Case 3 or Case 4. Then, for any $f \in H^1(I)$,
\beq\label{thm2_ineq}
 \|H f\|_{L^2(J)} \geq c_1 \exp{\left(-c_2\frac{ \|f_x\|_{L^2(I)}}{\|f\|_{L^2(I)}}\right)} \| f \|_{L^2(I)} ,
 \eeq
for some constants $c_1, c_2 > 0$ depending only on $I, J$. 
\end{thm}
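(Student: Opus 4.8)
The plan is to read off the lower bound from the singular value decomposition of $H_T = \mathp_J H \mathp_I$, combined with the (known) fact that $H_T$ commutes with a second-order Sturm--Liouville operator, which forces its singular functions to become increasingly oscillatory. Let $\{u_n\}_{n\geq 1}$ be the orthonormal right singular functions of $H_T$ on $I$, with singular values $\sigma_1 \geq \sigma_2 \geq \cdots \to 0$, so that for $f \in L^2(I)$, writing $a_n = \langle f, u_n\rangle$,
$$ \|H_T f\|_{L^2(J)}^2 = \sum_{n\geq 1}\sigma_n^2 a_n^2, \qquad \|f\|_{L^2(I)}^2 = \sum_{n\geq 1} a_n^2. $$
Since the $\sigma_n$ decrease, for any threshold $N$ one has the elementary bound $\|H_T f\|_{L^2(J)}^2 \geq \sigma_N^2 \sum_{n\leq N} a_n^2$. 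The whole game is then to show that an $H^1$ function cannot place more than, say, half of its $L^2$-mass on the high-index (rapidly damped) singular functions, and to choose $N$ as small as the regularity of $f$ permits.

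The mechanism controlling the tail $\sum_{n>N} a_n^2$ is the commuting operator. From the spectral analysis of Cases 3 and 4 \cite{reema1,reema2,kat3} there is a self-adjoint operator $Lu = (P u')' + q u$, with $P$ a quartic polynomial vanishing at the four endpoints of $I$ and $J$ and $q$ a fixed quadratic, whose eigenfunctions are exactly the $u_n$, say $L u_n = \mu_n u_n$, and whose eigenvalues obey the standard (Jacobi-type, since $P$ vanishes to first order at the endpoints of $I$) Sturm--Liouville growth $\mu_n \asymp n^2$. Because $P$ vanishes at the two endpoints of $I$, integration by parts produces no boundary term and gives, for $f \in H^1(I)$,
$$ \langle L f, f\rangle = -\int_I P\,|f_x|^2\,dx + \int_I q\,|f|^2\,dx \leq \|P\|_{L^\infty(I)} \|f_x\|_{L^2(I)}^2 + \|q\|_{L^\infty(I)}\|f\|_{L^2(I)}^2. $$
The (possibly changing) sign of $P$ on $I$ is irrelevant here, since we only need an upper bound and $-P \leq |P|$; this is what allows both cases to be handled at once. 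After shifting $L$ by a constant so that all eigenvalues are positive, this reads $\sum_n \mu_n a_n^2 \lesssim \|f_x\|_{L^2(I)}^2 + \|f\|_{L^2(I)}^2$, whence
$$ \sum_{n>N} a_n^2 \leq \frac{1}{\mu_{N+1}}\sum_{n>N}\mu_n a_n^2 \lesssim \frac{\|f_x\|_{L^2(I)}^2 + \|f\|_{L^2(I)}^2}{N^2}. $$

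It then remains to assemble the pieces. Writing $\kappa = \|f_x\|_{L^2(I)}/\|f\|_{L^2(I)}$ and normalizing $\|f\|_{L^2(I)} = 1$, the tail bound lets us choose $N \lesssim \kappa + 1$ so that $\sum_{n>N} a_n^2 \leq \tfrac12$, hence $\sum_{n\leq N} a_n^2 \geq \tfrac12$. Feeding this into the elementary bound and invoking the matching lower bound $\sigma_n \geq c\,e^{-\beta n}$ on the singular values yields
$$ \|H_T f\|_{L^2(J)}^2 \geq \tfrac12 \sigma_N^2 \geq \tfrac{c^2}{2} e^{-2\beta N} \geq c_1^2\, e^{-2c_2 \kappa}, $$
which is exactly the claimed inequality after undoing the normalization.

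I expect the main obstacle to be the spectral input rather than the functional-analytic bookkeeping. The argument crucially needs a \emph{lower} bound $\sigma_n \geq c\,e^{-\beta n}$ — the statement that the spectrum decays \emph{no faster} than a fixed exponential — whereas the introduction only records the upper bound $\lambda_{\mathrm{min}} \leq Ce^{-\beta n}$; and it needs this exponential rate to be correctly matched against the quadratic growth $\mu_n \asymp n^2$, since it is precisely the pairing $N\sim\kappa$ (from $\mu_N \sim \kappa^2$) together with $\sigma_N \sim e^{-\beta N}$ that produces the linear exponent in $\kappa$. Establishing or carefully citing these two asymptotics uniformly across Cases 3 and 4, and verifying that $H^1(I)$ lies in the form domain of $L$ so that the integration by parts and the identity $\langle Lf,f\rangle=\sum_n \mu_n a_n^2$ are legitimate, are the steps requiring the most care. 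Finally, the use of $\|f_x\|_{L^2}$ rather than $\|f_x\|_{L^1}$ is exactly what makes the quadratic form $\langle Lf,f\rangle$ available, which is why the total-variation version \eqref{thm2_ineq_conj} remains only a conjecture.
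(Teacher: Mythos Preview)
Your proposal is correct and follows essentially the same route as the paper: expand in the singular functions $u_n$, use the commuting Sturm--Liouville operator $L_I$ and integration by parts to bound $\sum_n \lambda_n |\langle f,u_n\rangle|^2 \lesssim \|f_x\|_{L^2}^2 + \|f\|_{L^2}^2$, combine this with $\lambda_n \gtrsim n^2$ to trap half the $L^2$-mass in the first $N \sim \kappa+1$ modes, and finish with $\sigma_n \geq e^{-k_2 n}$. The only points on which the paper is slightly more explicit are (i) it reduces Case~4 to Case~3 by shrinking $J$ to a disjoint subinterval $J^*$ rather than treating both cases with the same $L$, and (ii) it handles the boundary-term/form-domain issue you flag by the density of $H^2(I)\subset \mathcal{D}$ in $H^1(I)$.
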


We note that Theorem \ref{thm2} is weaker than the conjectured inequality (\ref{thm2_ineq_conj}): a step function $f$, for example, can be approximated by smooth functions $f_n$ in such a way that $\|(f_n)_x\|_{L^1}$
remains controlled by the total variation of $f$. However, this is no longer true for $\|(f_n)_x\|_{L^2},$ which must
necessarily blow up.
Yet we may improve on Theorem \ref{thm2} if $f$ is sufficiently smooth and obtain a result which in certain cases is as strong as the conjectured relation (\ref{thm2_ineq_conj}):
\begin{thm}\label{thm2a} Let $I, J \subset \mathbb{R}$ be intervals in the configuration of Case 3 or Case 4. Then there exists an order 2 differential operator $L_I$ and for any $M \geq 1$ a dense class $A_M$ of $L^2$-functions (defined in \S \ref{sec:diff-operator}) such that for any $f \in A_M$, 
\beq\label{thm2_ineqa}
 \|H f\|_{L^2(J)} \geq c_{1,M} \exp{\left(-c_{2,M}\left(\frac{ \|(L_I^M f)_x\|_{L^2(I)}}{\|f\|_{L^2(I)}}\right)^{\frac{1}{2M+1}} \right)} \| f \|_{L^2(I)} ,
 \eeq
for some constants $c_{1,M}, c_{2,M} > 0$ depending only on $I, J$ and $M.$ As $M \maps \infty$, $c_{1,M},c_{2,M}$ tend to finite limits $c_1,c_2>0$. Furthermore, $C_c^{2M+1}(I) \subset A_M$ and $C_c^{\infty}(I) \subset \bigcap_{n=1}^{\infty} A_n.$
\end{thm}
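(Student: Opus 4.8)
The plan is to diagonalize the truncated Hilbert transform $H_T = \mathp_J H \mathp_I$ and to exploit the fact, central to the prolate-spheroidal phenomenon and established for $H_T$ in the asymptotic analyses cited above (Katsevich--Tovbis for Case 3, \cite{reema2} for Case 4), that its singular functions coincide with the eigenfunctions of an explicit second-order differential operator. Concretely, let $\{u_n\}_{n\geq 0}$ be the orthonormal basis of $L^2(I)$ of right singular functions of $H_T$, with singular values $\sigma_0 \geq \sigma_1 \geq \cdots > 0$, so that writing $f = \sum_n a_n u_n$ gives $\|H_T f\|_{L^2(J)}^2 = \sum_n \sigma_n^2 a_n^2$ and $\|f\|_{L^2(I)}^2 = \sum_n a_n^2$. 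The operator $L_I$ is the commuting Sturm--Liouville operator $L_I = -\tfrac{d}{dx}\!\left(p\tfrac{d}{dx}\right) + q$ for which $L_I u_n = \mu_n u_n$, with $0 \le \mu_0 \le \mu_1 \le \cdots$. Everything then reduces to two competing quantitative inputs: the singular values $\sigma_n$ cannot decay faster than exponentially, while the eigenvalues $\mu_n$ grow quadratically.

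For the first input I would invoke the asymptotic SVD results to obtain $\sigma_n \geq c_1 e^{-c_2 n}$ with $c_1, c_2>0$ depending only on $I,J$; crucially, these constants are independent of $M$. For the second, the spectral analysis of $L_I$ yields $\mu_n \asymp n^2$, which is exactly the polynomial growth that will produce the exponent $\tfrac{1}{2M+1}$. The bridge to the hypothesis is an integration-by-parts identity: setting $g = L_I^M f$, one has $\langle L_I^{2M+1} f, f\rangle = \langle L_I g, g\rangle = \int_I \big( p\,|g_x|^2 + q\,|g|^2 \big)\,dx + (\text{boundary terms})$, and hence $\sum_n \mu_n^{2M+1} a_n^2 = \langle L_I g, g\rangle$. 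The class $A_M$ is defined precisely so that for $f \in A_M$ the iterate $L_I^M f$ is well defined of regularity $C^{2M+1}$ (whence $C_c^{2M+1}(I) \subset A_M$ and $C_c^\infty(I) \subset \bigcap_n A_n$) and the boundary contributions vanish, leaving $\sum_n \mu_n^{2M+1} a_n^2$ comparable to $\|(L_I^M f)_x\|_{L^2(I)}^2$. Establishing this comparison uniformly in $M$ --- absorbing the weight $p$ (which degenerates at the endpoints) and the lower-order term $q$ against $\|g_x\|_{L^2}^2$, and checking that the comparison constant $C$ stays bounded as $M \maps \infty$ --- is the main obstacle, and it is what forces the careful definition of $A_M$.

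Granting these inputs, the conclusion follows from a split-and-optimize argument. Since $\sigma_n$ is nonincreasing, $\|H_T f\|_{L^2(J)}^2 \geq \sigma_N^2 \sum_{n \leq N} a_n^2$, so it suffices to force $\sum_{n\leq N} a_n^2 \geq \tfrac12 \|f\|_{L^2(I)}^2$, i.e. to control the tail. Using the comparison above together with $\mu_n \gtrsim n^2$,
\[ \sum_{n>N} a_n^2 \leq \mu_N^{-(2M+1)} \sum_{n>N}\mu_n^{2M+1} a_n^2 \leq \frac{C}{c^{2M+1} N^{4M+2}}\, \|(L_I^M f)_x\|_{L^2(I)}^2, \]
so choosing $N \asymp \big(\|(L_I^M f)_x\|_{L^2(I)}/\|f\|_{L^2(I)}\big)^{1/(2M+1)}$ (the exponent $\tfrac{1}{2M+1}$ arising from $\mu_N^{2M+1}\asymp N^{2(2M+1)}$) secures $\sum_{n>N}a_n^2 \le \tfrac12\|f\|^2$. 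Combining with the singular-value lower bound gives
\[ \|H_T f\|_{L^2(J)} \geq \tfrac{1}{\sqrt 2}\,\sigma_N\,\|f\|_{L^2(I)} \geq c_{1,M}\exp\!\Big(-c_{2,M}\big(\tfrac{\|(L_I^M f)_x\|_{L^2(I)}}{\|f\|_{L^2(I)}}\big)^{\frac{1}{2M+1}}\Big)\|f\|_{L^2(I)}, \]
which is (\ref{thm2_ineqa}). Tracking constants, $c_{1,M}=c_1/\sqrt2$ and $c_{2,M} = c_2\,C'_M$ with $C'_M = (2C)^{1/(4M+2)}c^{-1/2}\to c^{-1/2}$ as $M \maps \infty$ (using the boundedness of $C$ from the previous paragraph), yielding the stated finite limits $c_1,c_2$. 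The density of $A_M$ and the inclusions are immediate from its definition, and the case $M=0$, where $L_I^0 f = f$ and the exponent is $1$, recovers Theorem \ref{thm2} as a consistency check.
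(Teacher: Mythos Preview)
Your proposal is correct and follows essentially the same route as the paper: diagonalize via the commuting Sturm--Liouville operator $L_I$, use the quadratic growth $\mu_n \gtrsim n^2$ together with self-adjointness to write $\sum_n \mu_n^{2M+1}a_n^2 = \langle L_I g, g\rangle$ for $g = L_I^M f$, bound this by integration by parts against $\|g_x\|_{L^2(I)}^2$ (plus a lower-order $\|g\|^2$ term), and then run the split-and-optimize argument with the exponential lower bound on $\sigma_n$. One small correction: the class $A_M$ in the paper is $\{f \in D(L_I^{M+1}) : L_I^M f \in H^1(I)\}$, so it requires only $H^1$ regularity of $L_I^M f$, not $C^{2M+1}$; the inclusion $C_c^{2M+1}(I)\subset A_M$ goes the other way (smoothness of $f$ forces $L_I^M f \in H^1$), and the comparison constant $k_3$ in $|\langle L_I g,g\rangle| \le k_3\|g_x\|^2 + k_3\|g\|^2$ is automatically $M$-independent since it comes from a single application of $L_I$---so this is not the obstacle you flag.
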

In certain examples, this result approaches the desired conjecture (\ref{thm2_ineq_conj}). 
Consider, for instance, the interval $I=(0,1)$, use dilation to move the support of the function $f_N(x) = \sin (2\pi Nx)\chi_{[0,1]}$
strictly inside the unit interval and convolve with a compactly supported $C^{\infty}$ bump function. In this case $(L_I^M f_N)_x$
contains a main term of size $(2\pi N)^{2M+1} \cos (2\pi Nx)$. Then, morally speaking, the theorem implies
\begin{eqnarray*}
  \|H f_N\|_{L^2(J)} &\geq &c_{1,M} \exp{\left(-c_{2,M} 2\pi N \left( c_J \frac{\| \cos(2\pi Nx) \|_{L^2(I)}}{\| f_N \|_{L^2(I)}}\right)^{\frac{1}{2M+1}} \right)} \|  f_N\|_{L^2(I)} \\
  & \geq & c_1 \exp{\left(-c_2N\right)} \|  f_N\|_{L^2(I)}
  \end{eqnarray*}
as $M \maps \infty.$ Here $c_{1,M}, c_{2,M}, c_1, c_2$ are as in Theorem \ref{thm2a} and $c_J$ is a constant depending only on $J$ (since we have fixed the interval $I$). This is of the form (\ref{thm2_ineq_conj}), because in this example 
\beq\label{relation}
\| (f_N)_x \|_{L^1(I)}/\| f_N \|_{L^2(I)}  \approx N.
\eeq

\subsection{A quantitative result for functions with bounded variation.} Our next result gives a different type of result toward the conjecture (\ref{thm2_ineq_conj}), now for functions $f \in W^{1,1}(I)$, and with quadratic scaling within
the exponential.
We note that the inequality below is superior to the bound given by Theorem \ref{thm2} only for
functions with $\|f_x\|_{L^2} \gg \|f_x\|_{L^1}^2/\| f\|_{L^2(I)}$.
\begin{thm}\label{thm3} Let $I, J \subset \mathbb{R}$ be intervals in the configuration of Case 3 or Case 4. Then, for any $f \in W^{1,1}(I)$,
$$ \|H f\|_{L^2(J)} \geq c_1 \exp{\left(-c_2\frac{ |f|^2_{TV}}{\|f\|^2_{L^2(I)}}\right)} \| f \|_{L^2(I)} ,$$
for some constants $c_1, c_2 > 0$ depending only on $I, J$. 
\end{thm}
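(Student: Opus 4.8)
The plan is to run the same spectral argument that underlies Theorem \ref{thm2}, but to convert the regularity of $f$ into decay of its singular-function coefficients through a single integration by parts paired against the total variation, rather than through an $H^1$ energy estimate. Let $H_T = \mathp_J H \mathp_I$ and let $\{(\sigma_n, v_n, u_n)\}_{n\geq 1}$ be its singular system, with $\sigma_1 \geq \sigma_2 \geq \cdots > 0$ and $\{v_n\}$ an orthonormal basis of $L^2(I)$. The decisive structural input is the second-order differential operator $L_I$ of Theorem \ref{thm2a}: it is self-adjoint, it commutes with $H_T$, and its eigenfunctions coincide with the singular functions, $L_I v_n = \mu_n v_n$ with $0 < \mu_1 \leq \mu_2 \leq \cdots$, the ordering being consistent so that increasing $\mu_n$ corresponds to decreasing $\sigma_n$. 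Writing $f = \sum_n a_n v_n$ with $a_n = \langle f, v_n\rangle$, we have $\|f\|_{L^2(I)}^2 = \sum_n a_n^2$ and $\|H_T f\|_{L^2(J)}^2 = \sum_n \sigma_n^2 a_n^2$, so everything reduces to showing that the bulk of the $L^2$-mass of $f$ lives in the modes $n \leq N$ with $N$ controlled by $|f|_{TV}$.

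First I would reduce to $f \in C_c^\infty(I)$. Using Lemma \ref{lemma_approx} (after the harmless normalization that $f$ vanish at a point) we approximate $f$ in $L^2(I)$ by smooth, compactly supported $f_k$ with $|f_k|_{TV} \leq 3\,|f|_{TV}$; since $H_T$ is bounded on $L^2$ and $\|f_k\|_{L^2(I)} \to \|f\|_{L^2(I)}$, the inequality for the $f_k$ passes to the limit at the cost of enlarging $c_2$ by a fixed factor. This reduction also makes every boundary term in the integrations by parts below vanish identically.

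The heart of the matter is the coefficient bound. Because $v_n = \mu_n^{-1} L_I v_n$ and $L_I$ is self-adjoint, $a_n = \mu_n^{-1}\langle f, L_I v_n\rangle = \mu_n^{-1} B[f, v_n]$, where $B[g,\phi] = \int_I (p\,g'\phi' + q\,g\phi)\,dx$ is the Dirichlet form of $L_I g = -(p g')' + q g$. Placing the single available derivative of $f$ in $L^1$ gives $|a_n| \leq \mu_n^{-1}\big(\,|f|_{TV}\,\|p v_n'\|_{L^\infty} + \|q\|_{L^\infty}\,\|f\|_{L^2(I)}\,\big)$. Invoking the eigenvalue growth $\mu_n \asymp n^2$ and the uniform flux bound $\|p v_n'\|_{L^\infty} \lesssim \sqrt{\mu_n}$ (for $L^2$-normalized $v_n$) yields
\[
 |a_n| \;\lesssim\; \frac{|f|_{TV}}{n} + \frac{\|f\|_{L^2(I)}}{n^{2}},
\]
whence $\sum_{n > N} a_n^2 \lesssim |f|_{TV}^2/N + \|f\|_{L^2(I)}^2/N^{3}$. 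Setting $\kappa = |f|_{TV}/\|f\|_{L^2(I)}$ and choosing $N \asymp \kappa^2$ (plus a fixed additive constant to cover small $\kappa$) forces $\sum_{n>N} a_n^2 \leq \tfrac12 \|f\|_{L^2(I)}^2$, so that $\sum_{n \leq N} a_n^2 \geq \tfrac12 \|f\|_{L^2(I)}^2$. The quadratic exponent of the theorem enters precisely here: a $1/n$ decay of the coefficients (rather than the $1/n^{3/2}$ one obtains from $\|f_x\|_{L^2}$ in Theorem \ref{thm2}) turns an $N \asymp \kappa$ cutoff into an $N \asymp \kappa^2$ cutoff.

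Finally I would assemble the bound. Since the $\sigma_n$ are decreasing, $\|H_T f\|_{L^2(J)}^2 \geq \sigma_N^2 \sum_{n\leq N} a_n^2 \geq \tfrac12\,\sigma_N^2\,\|f\|_{L^2(I)}^2$, and inserting the exponential lower bound $\sigma_N \geq c\,e^{-\beta N}$ for the singular values of $H_T$ (for Case 3 from the asymptotics of Katsevich--Tovbis, and in Case 4 by comparison with the subproblem of functions supported away from $J$) together with $N \asymp \kappa^2$ gives $\|H_T f\|_{L^2(J)} \geq c_1 \exp(-c_2 \kappa^2)\|f\|_{L^2(I)}$, as claimed. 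The main obstacle is the uniform flux estimate $\|p v_n'\|_{L^\infty} \lesssim \sqrt{\mu_n}$ (with the companion asymptotic $\mu_n \asymp n^2$): $L_I$ is a singular Sturm--Liouville operator whose leading coefficient $p$ degenerates at the endpoints of $I$ and at the points tied to $J$, so these pointwise bounds cannot be quoted from the regular theory and must be derived directly from the eigenfunction ODE, for instance through a Wronskian/energy identity or a Liouville--Green analysis, with constants uniform in $n$.
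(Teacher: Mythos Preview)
Your overall strategy coincides with the paper's: reduce to $C_c^\infty(I)$ via Lemma~\ref{lemma_approx}, establish $|\langle f,u_n\rangle|\le c\,|f|_{TV}/n$, cut off at $N$ of order $\kappa^2$, and invoke $\sigma_N\ge e^{-k_2 N}$. The packaging of the key coefficient bound differs. The paper (Lemma~\ref{lemma_un}) integrates by parts once against the \emph{antiderivative} of $u_n$ and proves directly that $\sup_{x\in I}\bigl|\int_{a_3}^x u_n\bigr|\le c/n$; you instead integrate by parts inside the Dirichlet form and reduce to the flux bound $\|P u_n'\|_{L^\infty(I)}\le c\sqrt{\lambda_n}$. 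These two inputs are equivalent through the eigenvalue ODE $(P u_n')'=(\lambda_n-2(x-\sigma)^2)u_n$: integrating from $a_3$, where $P u_n'$ vanishes by the domain condition defining $\mathcal{D}$, gives $P u_n'(x)=\lambda_n\int_{a_3}^x u_n - \int_{a_3}^x 2(t-\sigma)^2 u_n(t)\,dt$, so each bound implies the other up to a lower-order term. Your proposed Liouville--Green analysis therefore amounts to exactly the WKB-plus-Bessel computation the paper carries out in the appendix; neither route avoids that work, and neither buys a shortcut.

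One loose end: calling the hypothesis that $f$ vanish at a point a ``harmless normalization'' skips a case. Lemma~\ref{lemma_approx} genuinely requires a zero of $f$, and an arbitrary $f\in W^{1,1}(I)$ need not have one. The paper treats the complementary case ($f$ of fixed sign on $I$) by a separate, elementary argument (Lemma~\ref{polydecay}) that in fact yields a stronger, merely polynomial, lower bound. You should either supply such an argument or indicate how the zero hypothesis can be dropped.
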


Theorem \ref{thm3} provides a stability estimate (independent of a specific algorithm) for the reconstruction 
of a solution $f$ to $H_T f = g$. 
\begin{cor}[Stable reconstruction]\label{cor-stab}
Let $g \in \text{Ran}(H_T)$, such that $$H_T f_{\text{ex}} = g,$$ and $| f_{\text{ex}}|_{TV} \leq \kappa$.  
Furthermore, let $g^{\delta} \in L^2(J)$ satisfy $$\| g - g^{\delta}\|_{L^2(J)} \leq \delta$$ for some $\delta >0$ and define the set of admissible solutions to be 
$$S(\delta, g^{\delta}) = \{ f \in W^{1,1}(I): \| H_T f - g^{\delta}\|_{L^2(J)} \leq \delta, | f|_{TV} \leq \kappa \}.$$
Then, the diameter of $S(\delta,g^{\delta})$ tends to zero as $\delta \to 0$ (at a rate of the order $|\log \delta|^{-1/2}$).
\end{cor}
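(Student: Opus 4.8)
The plan is to reduce the diameter bound to a direct application of Theorem \ref{thm3} to the difference of two admissible solutions. First I would record that $S(\delta, g^{\delta})$ is nonempty: since $H_T \fex = g$ with $\|g - g^{\delta}\|_{L^2(J)} \leq \delta$ and $|\fex|_{TV} \leq \kappa$, the exact solution $\fex$ itself lies in $S(\delta, g^{\delta})$. To estimate the diameter, fix arbitrary $f_1, f_2 \in S(\delta, g^{\delta})$ and set $f = f_1 - f_2 \in W^{1,1}(I)$. Two triangle inequalities then control exactly the two quantities appearing in Theorem \ref{thm3}: on the one hand,
$$\|H_T f\|_{L^2(J)} \leq \|H_T f_1 - g^{\delta}\|_{L^2(J)} + \|g^{\delta} - H_T f_2\|_{L^2(J)} \leq 2\delta,$$
and on the other hand $|f|_{TV} \leq |f_1|_{TV} + |f_2|_{TV} \leq 2\kappa$. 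Since $f$ is supported on $I$ we have $\mathp_I f = f$ and hence $\|H_T f\|_{L^2(J)} = \|Hf\|_{L^2(J)}$, so Theorem \ref{thm3} applies directly to $f$.

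Writing $N = \|f\|_{L^2(I)}$ for the quantity we wish to bound, Theorem \ref{thm3} combined with the two estimates above yields
$$2\delta \geq \|Hf\|_{L^2(J)} \geq c_1 \exp\left(-c_2 \frac{|f|_{TV}^2}{N^2}\right) N \geq c_1 \exp\left(-\frac{4 c_2 \kappa^2}{N^2}\right) N,$$
where in the last step I used $|f|_{TV}^2 \leq 4\kappa^2$ to replace the exponent by a smaller one, which only enlarges the exponential. The function $\phi(N) = c_1 N \exp(-4c_2\kappa^2/N^2)$ is a strictly increasing bijection of $(0,\infty)$ onto itself (its derivative equals $c_1 e^{-4c_2\kappa^2/N^2}(1 + 8c_2\kappa^2/N^2) > 0$, with $\phi(0^+)=0$ and $\phi(\infty)=\infty$). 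Hence $\phi(N) \leq 2\delta$ forces $N \leq \phi^{-1}(2\delta) =: N_*(\delta)$, a bound uniform over all pairs $f_1, f_2 \in S(\delta, g^{\delta})$ and therefore an upper bound for the diameter.

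It remains to extract the rate at which $N_*(\delta) \to 0$. Setting $u = N_*^{-2}$ and taking logarithms in $c_1 N_* \exp(-4c_2\kappa^2/N_*^2) = 2\delta$ gives $4c_2\kappa^2 u = \log c_1 - \tfrac12 \log u - \log(2\delta)$. As $\delta \to 0^{+}$ we have $u \to \infty$, and the right-hand side is dominated by $-\log(2\delta) = |\log(2\delta)|$, the correction $-\tfrac12\log u$ being of lower order since it grows only like $\log\log(1/\delta)$. Thus $u \sim |\log(2\delta)|/(4c_2\kappa^2)$, whence
$$N_*(\delta) \sim \frac{2\kappa\sqrt{c_2}}{\sqrt{|\log(2\delta)|}} = O\bigl(|\log\delta|^{-1/2}\bigr),$$
which is the stated rate. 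The only point demanding genuine care is this asymptotic inversion: one must confirm that the logarithmic correction $-\tfrac12\log u$ does not disturb the leading order. I would make this rigorous by sandwiching $N_*(\delta)$ between two explicit constant multiples of $|\log\delta|^{-1/2}$ for all sufficiently small $\delta$ and verifying the two resulting inequalities $\phi(c'|\log\delta|^{-1/2}) \lessgtr 2\delta$ directly from the definition of $\phi$; the remaining steps are routine.
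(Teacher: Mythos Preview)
Your proof is correct and follows the same approach as the paper: apply Theorem~\ref{thm3} to the difference $f = f_1 - f_2$, bound $\|H_T f\|_{L^2(J)} \leq 2\delta$ and $|f|_{TV} \leq 2\kappa$ by the triangle inequality, and then invert the resulting implicit inequality for $N = \|f\|_{L^2(I)}$.

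The only difference is in the final extraction of the rate. Where you analyze the inverse $\phi^{-1}(2\delta)$ asymptotically and handle the $-\tfrac12\log u$ correction, the paper avoids this by a one-line trick: from $\log N - 4c_2\kappa^2/N^2 \leq \log(2\delta/c_1)$ it invokes the elementary bound $x^2 \log|x| \geq -1/(2e)$ (valid for all real $x$) to replace $\log N$ by $-1/(2eN^2)$, immediately yielding the explicit non-asymptotic bound
\[
N \leq \sqrt{\frac{\tfrac{1}{2e} + 4c_2\kappa^2}{\left|\log\!\big(\tfrac{2\delta}{c_1}\big)\right|}}
\]
for $\delta \leq c_1/2$. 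This is cleaner than your asymptotic sandwiching, though both lead to the same $|\log\delta|^{-1/2}$ rate.
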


Thus, under the assumption that the true solution $f_{\text{ex}}$ to $H_T f = g$ has bounded variation, any algorithm that, given $\delta$ and $g^{\delta}$, finds a solution in $S(\delta, g^{\delta})$, is a regularization method. 

As with Theorem \ref{thm2}, we are again able to improve on Theorem \ref{thm3} by assuming $f$ is sufficiently smooth, in which case the inequality approaches in the limit an inequality that is in certain cases as strong as the conjecture \eqref{thm2_ineq_conj}.
\begin{thm}\label{thm3a} Let $I, J \subset \mathbb{R}$ be intervals in the configuration of Case 3 or Case 4.   
Then there exists an order 2 differential operator $L_I$ (defined in \S \ref{sec:diff-operator}) such that for any $M \geq 1$ and any $f \in C_c^{2M+1}(I)$, 
$$ \|H f\|_{L^2(J)} \geq c_{1,M} \exp{\left(-c_{2,M} \left( \frac{ |L_I^M f|_{TV}}{\|f\|_{L^2(I)}}\right)^{\frac{2}{4M+1}} \right)} \| f \|_{L^2(I)} ,$$
for some constants $c_{1,M}, c_{2,M} > 0$ depending only on $I, J$ and $M,$ with the property that as $M \maps \infty$, $c_{1,M},c_{2,M}$ tend to finite limits $c_1,c_2>0$. 
\end{thm}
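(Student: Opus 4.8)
The plan is to run the singular-value argument behind Theorem~\ref{thm3}, but to feed it a much faster decay of the expansion coefficients harvested from the operator $L_I$ of \S\ref{sec:diff-operator}. Let $\{u_n\}_{n\geq 0}$ be the singular functions of $H_T=\mathp_JH\mathp_I$, an orthonormal basis of $L^2(I)$, with singular values $\sigma_0\geq\sigma_1\geq\cdots>0$; expanding $f=\sum_n a_nu_n$ gives $\|f\|_{L^2(I)}^2=\sum_n a_n^2$ and $\|Hf\|_{L^2(J)}^2=\sum_n\sigma_n^2a_n^2$. I would use two structural inputs from the spectral theory: first, that the $u_n$ are precisely the eigenfunctions of the self-adjoint second-order operator $L_I$ (they diagonalize $H_T^*H_T$, with which $L_I$ commutes), with eigenvalues $\mu_n$ obeying the second-order Weyl growth $\mu_n\sim c\,n^2$; and second, the two-sided exponential behaviour of the singular values, in particular the lower bound $\sigma_n^2\geq c_1e^{-c_2n}$.

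The heart of the matter is a coefficient estimate. Since $L_I^M$ has order $2M$ and $f\in C_c^{2M+1}(I)$ vanishes near $\partial I$ together with its derivatives, self-adjointness and $L_I^Mu_n=\mu_n^Mu_n$ give, with no boundary contributions, $a_n=\langle f,u_n\rangle=\mu_n^{-M}\langle L_I^Mf,u_n\rangle$. Writing $U_n(x)=\int u_n$ for an antiderivative and integrating by parts once — the boundary term again vanishes because $L_I^Mf$ is compactly supported in $I$ (so one extra derivative, hence the regularity $C_c^{2M+1}$) — yields $\langle L_I^Mf,u_n\rangle=-\int_I (L_I^Mf)_x\,U_n$, whence $|\langle L_I^Mf,u_n\rangle|\leq |L_I^Mf|_{TV}\,\|U_n\|_{L^\infty(I)}$. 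The key quantitative claim, which I would derive from the ODE $L_Iu_n=\mu_nu_n$, is the oscillation bound $\|U_n\|_{L^\infty(I)}\lesssim\mu_n^{-1/2}$: morally, a solution oscillating at local frequency $\sqrt{\mu_n}$ has an antiderivative of amplitude $\mu_n^{-1/2}$, which one makes rigorous by solving the equation for $u_n$ so that $U_n$ is recovered from $P\,u_n'$ (with $P$ the leading coefficient of $L_I$) plus a lower-order term, and inserting Liouville--Green pointwise bounds for $u_n,u_n'$. Combining, $a_n^2\lesssim |L_I^Mf|_{TV}^2\,\mu_n^{-(2M+1)}\sim |L_I^Mf|_{TV}^2\,n^{-(4M+2)}$.

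With this decay the remainder is the skeleton of Theorem~\ref{thm3}. Summing the tail gives $\sum_{n>N}a_n^2\lesssim |L_I^Mf|_{TV}^2\,N^{-(4M+1)}$, so the choice $N\sim(|L_I^Mf|_{TV}/\|f\|_{L^2(I)})^{2/(4M+1)}$ forces $\sum_{n>N}a_n^2\leq\tfrac12\|f\|_{L^2(I)}^2$ and hence $\sum_{n\leq N}a_n^2\geq\tfrac12\|f\|_{L^2(I)}^2$. Using monotonicity of the $\sigma_n$ and then $\sigma_N^2\geq c_1e^{-c_2N}$,
$$\|Hf\|_{L^2(J)}^2=\sum_n\sigma_n^2a_n^2\geq\sigma_N^2\sum_{n\leq N}a_n^2\geq \tfrac{c_1}{2}\,e^{-c_2N}\,\|f\|_{L^2(I)}^2,$$
and substituting the chosen $N$ and taking square roots produces exactly the exponent $\tfrac{2}{4M+1}$. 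That exponent is forced by pairing the $\mu_n^{-M}$ gain with the single factor $\mu_n^{-1/2}$ from one integration by parts against a $TV$-bounded function, in contrast with the $\tfrac{1}{2M+1}$ of Theorem~\ref{thm2a}, where the $L^2$ pairing spends a full $\mu_n^{-1}$.

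I expect two genuine difficulties. The main obstacle is the uniform-in-$n$ antiderivative bound $\|U_n\|_{L^\infty(I)}\lesssim\mu_n^{-1/2}$: the leading coefficient of $L_I$ degenerates at the endpoints of $I$, so the eigenfunctions display turning-point (Airy-type) behaviour there, and controlling $\|u_n\|_{L^\infty}$, $\|u_n'\|_{L^\infty}$ and the boundary values of $P\,u_n'$ uniformly in $n$ near those degenerate endpoints is the delicate step. The second is bookkeeping the $M$-dependence of every implied constant — from $\mu_n\sim c\,n^2$, from the tail summation, and from the $\sigma_N$ lower bound — tightly enough to conclude that $c_{1,M},c_{2,M}$ converge to finite positive limits as $M\maps\infty$.
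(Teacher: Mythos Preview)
Your proposal is correct and essentially coincides with the paper's proof: the paper's Lemma~\ref{lemma_un} is exactly your antiderivative bound $\sup_{x\in I}\bigl|\int_{a_3}^x u_n\bigr|\leq c/n$ (equivalently $\|U_n\|_{L^\infty}\lesssim\mu_n^{-1/2}$), and the subsequent steps --- passing $L_I^M$ onto $f$ by self-adjointness, the tail estimate $\sum_{n>N}a_n^2\lesssim|L_I^Mf|_{TV}^2\,N^{-(4M+1)}$, the choice of $N$, and the verification that $c_{1,M},c_{2,M}$ stabilise as $M\to\infty$ --- are carried out just as you describe. The only minor difference is in how the antiderivative bound itself is established: rather than extracting $U_n$ from the ODE via $P\,u_n'$ and Liouville--Green pointwise control as you sketch, the paper integrates the explicit WKB asymptotic form of $u_n$ from Katsevich--Tovbis directly, with a separate Bessel-function analysis in the $\mathcal{O}(\varepsilon^{1+2\delta})$ neighbourhoods of the degenerate endpoints --- precisely the turning-point issue you flagged as the main obstacle.
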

Note that Theorem \ref{thm3a} reduces to Theorem \ref{thm3} for $M=0$.
It is again instructive to consider an example. For this purpose we can take $f_N(x) = \sin (2\pi Nx)$ with the interval $I = (0,1)$ as before, and again use dilation and convolution with a compactly supported $C^{\infty}$ bump function to bring $f_N$ into $C_c^{2M+1}(I)$.
Then, morally speaking, the theorem implies
\begin{align*}
  \|H f_N\|_{L^2(J)} &\geq c_{1,M} \exp{\left(-c_{2,M} \left( \frac{(2\pi N)^{2M+1}\| \cos(2\pi Nx) \|_{L^1(I)}}{\| f_N \|_{L^2(I)}}\right)^{\frac{2}{4M+1}} \right)} \|  f_N\|_{L^2(I)} \\
  & \geq c_1 \exp{\left(-c_2' N\right)} \|  f_N\|_{L^2(I)}
  \end{align*}
as $M \maps \infty$; the relation (\ref{relation}) shows this is as strong as (\ref{thm2_ineq_conj}). \\

The proofs of both Theorem \ref{thm2} and Theorem \ref{thm3} (see Sections \ref{sec:Proof-Thm2} and \ref{sec:Proof-Thm3}) are in a similar spirit and hinge on $TT^*$ arguments in combination with an 
eigenfunction decomposition of $TT^*$. The eigenfunctions are well understood; the difficulty is in putting
this information to use in the most effective way.
The proof of Theorem \ref{thm2} uses their orthogonality and the fact that an associated differential operator is comparable to $-\Delta$, but 
does not rely on the asymptotic behavior of the eigenfunctions (merely on asymptotics of the eigenvalues).
In contrast, the proof of Theorem \ref{thm3} uses an elementary estimate adapted to the eigenfunctions and inspired from
classical Fourier analysis: this estimate is sharp but not sophisticated enough to capture complicated behavior at
different scales simultaneously. It is not clear to us whether and how these arguments could be refined.

\subsection{An improved estimate for Case 4.}\label{sec:results-overlap}

Case 3, with disjoint intervals $I$ and $J$, is the worst case scenario in terms of reconstruction from Hilbert transform data. It seems that reconstruction in Case 4, the truncated Hilbert transform with overlap, is an easier task in the sense that one would expect the inversion problem to be more stable. The singular values decay to zero at a similar exponential rate in both cases, since the Hilbert transform with overlap contains, at this level of generality, the Hilbert transform with a gap as a special case (acting on functions supported away
from $I \intersect J$). It is this ill-posedness that in practice has led to the concept of \textit{region of interest} reconstruction. Here, the aim is to reconstruct the function $f$ only on the region where the Hilbert transform has been measured. For the truncated Hilbert transform with overlap this means reconstruction of $f$ only on the overlap region $I \cap J$. 

The reason this problem of partial reconstruction inside $I \intersect J$ may be more stable has an intuitive explanation: one would expect interaction with the singularity of the Hilbert transform
to be such that it cannot lead to significant cancellation. {More formally, one can consider the singular value decomposition of $H_T$. In the case where $I \intersect J \neq \emptyset$, the singular values accumulate at both $0$ and $1$. Moreover, the singular functions have the property that they oscillate on $I \cap J$ and are monotonically decaying to zero on $I \backslash J$ as the singular values accumulate at $1$. The opposite is true when the singular values decay to zero: the corresponding singular functions oscillate on $I \backslash J$, i.e., outside of the region of interest, and are monotonically decaying to zero on $I \cap J$. (For a proof of these properties we refer to \cite{erdelyi}.) Figure \ref{fig:sing-func} below illustrates the behavior of the singular functions for a specific choice of overlapping intervals $I$ and $J$.

\begin{figure}[ht!]
  \begin{center}

        \subfigure{
 \label{fig:sing-func-0}
           \includegraphics[width=0.45\textwidth]{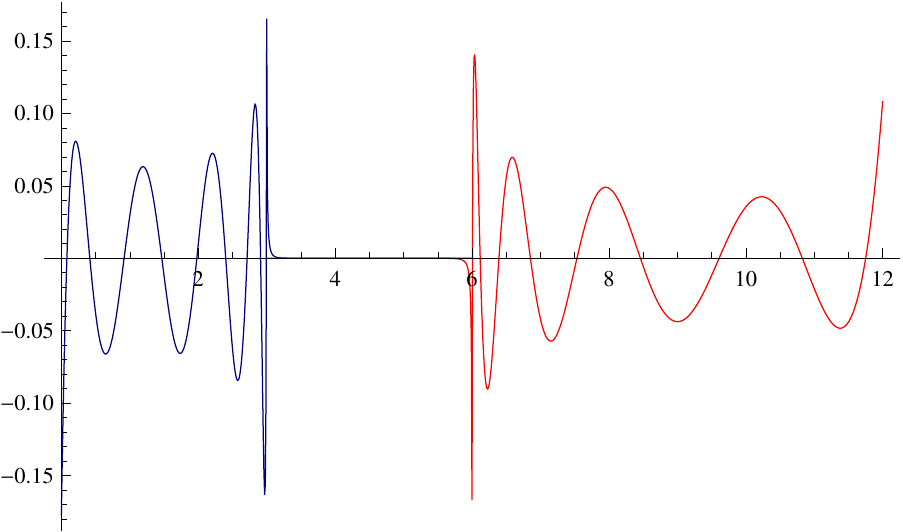}
     }
      \subfigure{
           \label{fig:sing-func-1}
            \includegraphics[width=0.45\textwidth]{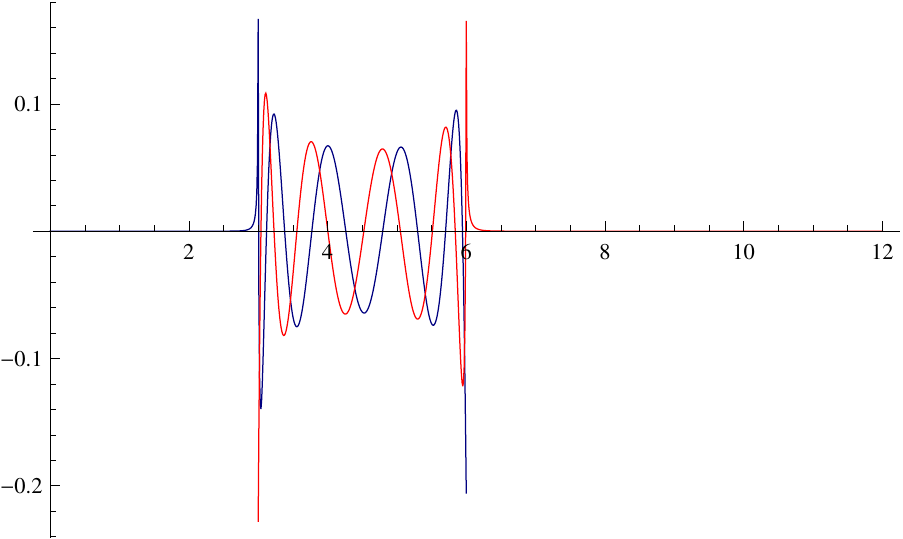}
        }
   \end{center}
    \caption{Examples of singular functions $u_n$ (red) and $v_n$ (blue) for the overlap case $I=(0,6)$, $J=(3,12)$. \textit{Left:} For $\sigma_n$ close to 0, the singular functions are exponentially small on $(3,6)$ and oscillate outside of $(3,6)$. \textit{Right:} For $\sigma_n$ close to 1, the functions oscillate on $(3,6)$ and are exponentially small outside of the overlap region.
   }
   \label{fig:sing-func}
\end{figure}

A more precise estimate on the decaying part of the singular functions is the subject of joint work by the first author with M. Defrise and A. Katsevich \cite{adk2}. Let $I = (a_2,a_4)$ and $J=(a_1,a_3)$ for real numbers $a_1<a_2<a_3<a_4$,  and let us consider the singular functions $u_n$ on $I$ corresponding to the singular values $\sigma_n$ decaying to zero. Then, one can show that for any $\mu>0$ there exist positive constants $B_{\mu}$ and $\beta_{\mu}$ such that 
\[\|u_n\|_{L^2([a_2,a_3-\mu])} \leq B_{\mu} e^{-\beta_{\mu} n}\]
 for sufficiently large index $n$.
Exploiting this property, we can eliminate the dependence  in Theorem \ref{thm3} on the variation of $f$ within the region of interest.

\begin{thm}\label{thm4}
Let $J=(a_1,a_3)$ and $I=(a_2,a_4) \subset \mathbb{R}$ be open intervals  with $a_1< a_2<a_3<a_4$. Fix a closed subinterval $J^* = [a_1^*,a_3^*] \subset J$ with $a_1^* < a_2 < a_3^*$. Then for any function $f \in W^{1,1}(I)$
such that there exists at least one point $x_0 \in I\setminus J^*$ at which $f(x_0)=0$,  the following holds:
$$ \|H f\|_{L^2(J)} \geq c_1 \exp{\left(-c_2\frac{ |\chi_{I \backslash J^*} f|^2_{TV}}{\|f\|^2_{L^2(I)}}\right)} \| f \|_{L^2(I)} ,$$
for some constants $c_1, c_2 > 0$ depending only on $I, J$ and $J^*$. 
\end{thm}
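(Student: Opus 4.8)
The plan is to run the proof of Theorem \ref{thm3} --- the $TT^*$/singular-function argument --- essentially verbatim, changing only the estimate for the expansion coefficients of $f$ so that the total variation over all of $I$ is replaced by the variation over $I\setminus J^*$ alone. Write $H_T = \mathp_J H \mathp_I$ and let $\{(\sigma_n,u_n,v_n)\}$ be its singular system, so that $\{u_n\}$ is an orthonormal basis of $L^2(I)$ (the nullspace is trivial in Cases 3--4). Expanding $f=\sum_n c_n u_n$ with $c_n=\langle f,u_n\rangle$ gives $\|f\|_{L^2(I)}^2=\sum_n c_n^2$ and
\[ \|Hf\|_{L^2(J)}^2=\|H_Tf\|_{L^2(J)}^2=\sum_n \sigma_n^2 c_n^2. \]
As in Theorem \ref{thm3}, everything reduces to showing that the coefficients attached to the singular values $\sigma_n\to 0$ decay fast once the relevant variation is controlled; the new feature is that only the variation on $I\setminus J^*$ should enter.

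The heart of the matter is a coefficient estimate for the $u_n$ whose singular values tend to $0$. Since $I\cap J^*=(a_2,a_3^*]$ and $I\setminus J^*=(a_3^*,a_4)$, I would split
\[ c_n=\int_{a_2}^{a_3^*} f\,u_n\,dx+\int_{a_3^*}^{a_4} f\,u_n\,dx=:c_n^{(1)}+c_n^{(2)}. \]
For $c_n^{(1)}$ I would invoke the localization estimate of \cite{adk2} with $\mu=a_3-a_3^*>0$: from $\|u_n\|_{L^2([a_2,a_3^*])}=\|u_n\|_{L^2([a_2,a_3-\mu])}\le B_\mu e^{-\beta_\mu n}$ and Cauchy--Schwarz one gets $|c_n^{(1)}|\le B_\mu e^{-\beta_\mu n}\|f\|_{L^2(I)}$, which is negligible. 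For $c_n^{(2)}=\langle \chi_{I\setminus J^*}f,u_n\rangle$ I would apply the same elementary Fourier-type integration-by-parts estimate that drives Theorem \ref{thm3}, now to the $BV$ function $\chi_{I\setminus J^*}f$, whose total variation is exactly $V^*:=|\chi_{I\setminus J^*}f|_{TV}$ (including the jump at $a_3^*$), after approximating it by smooth functions via Lemma \ref{lemma_approx}. This should yield $|c_n^{(2)}|\le C\,V^*/n$. Here the hypothesis that $f(x_0)=0$ for some $x_0\in I\setminus J^*$ is essential: it bounds the boundary contributions and indeed $\|f\|_{L^\infty(I\setminus J^*)}$ by $V^*$, since $|f(x)|=|f(x)-f(x_0)|\le V^*$ on $I\setminus J^*$, so that no $\|f\|_{L^2}$-term can contaminate the exponent.

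Combining gives $|c_n|^2\le 2B_\mu^2 e^{-2\beta_\mu n}\|f\|_{L^2(I)}^2+2C^2(V^*)^2/n^2$, and I would then conclude exactly as in Theorem \ref{thm3}. First split off the singular values near $1$ (in Case 4 the $\sigma_n$ accumulate at both $0$ and $1$): if the projection of $f$ onto those modes carries at least half of $\|f\|_{L^2(I)}^2$ we are done, since $\sigma_n\ge \tfrac12$ there; otherwise the decaying family carries at least half the mass, and $\sigma_n\ge\sigma_N$ for $n\le N$ yields $\|H_Tf\|_{L^2(J)}^2\ge \sigma_N^2\sum_{n\le N}c_n^2$, where $\sum_{n>N}c_n^2\lesssim \|f\|_{L^2(I)}^2 e^{-2\beta_\mu N}+(V^*)^2/N$. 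Choosing $N\sim (V^*)^2/\|f\|_{L^2(I)}^2$ forces the tail below a quarter of the mass, so $\sum_{n\le N}c_n^2\gtrsim\|f\|_{L^2(I)}^2$, and inserting the known exponential lower bound $\sigma_N\ge c\,e^{-\beta N}$ gives $\|H_Tf\|_{L^2(J)}^2\gtrsim e^{-c_2(V^*)^2/\|f\|_{L^2(I)}^2}\|f\|_{L^2(I)}^2$, which is the claim after taking square roots and relabelling constants. The hard part will be the localized estimate for $c_n^{(2)}$: on $I\setminus J^*=(a_3^*,a_4)$ the function $u_n$ is partly still exponentially small (on $(a_3^*,a_3)$) and only oscillatory on $[a_3,a_4)$, so one must verify that the Fourier-type argument survives this non-uniform behaviour and that the boundary terms are genuinely absorbed into $V^*$ through the vanishing hypothesis. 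The quantitative decay of \cite{adk2} is precisely what makes the contribution of the region of interest $I\cap J^*$ disappear from the final bound.
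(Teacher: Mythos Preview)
Your proposal is correct and follows essentially the same route as the paper: split $\langle f,u_n\rangle$ into the piece over $I\cap J^*$ (controlled by the exponential decay of $\|u_n\|_{L^2(I\cap J^*)}$ from \cite{adk2}) and the piece over $I\setminus J^*$ (controlled by integration by parts against the antiderivative of $u_n$), then run the tail argument of Theorem~\ref{thm3} with the $\mathbb{Z}$-indexed singular system. The step you correctly flag as ``the hard part'' --- the bound $\sup_{x\in I\setminus J^*}\bigl|\int_{a_3^*}^x u_n\bigr|\le c/n$ in the overlap case, where $u_n$ is approximated near $a_3$ by a combination of $J_0$ and $Y_0$ rather than $J_0$ alone --- is not a consequence of Lemma~\ref{lemma_un} and is established in the paper by a direct WKB/Bessel computation in the appendix.
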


\textit{Remark.}
Theorem \ref{thm4} can be used in a similar fashion as Theorem \ref{thm3} to obtain a stability estimate analogous to Corollary \ref{cor-stab}. As prior knowledge we assume $|\chi_{I \backslash J^*} f_{ex}|_{TV} \leq \kappa$ and $\int_I f_{ex} = C$. Then, the statement can be formulated similarly as before, with the only change that the set of admissible solutions becomes
$$S(\delta, g^{\delta}) = \{ f \in W^{1,1}(I): \| H_T f - g^{\delta}\|_{L^2(J)} \leq \delta, |\chi_{I \backslash J^*} f|_{TV} \leq \kappa, \int_I f = C \}.$$
One can then adapt the proof of Corollary \ref{cor-stab} to obtain that the diameter of $S(\delta, g^{\delta})$ tends to zero as $\delta \to 0$ at a similar rate as before of the order $|\log \delta|^{-1/2}$. The only difference to Corollary \ref{cor-stab} is that now the constants in (\ref{cor-bound}) depend not only on $I$ and $J$, but also on $J^*$. 

\textit{Remark.}
Under the assumption that $f$ does not vanish on $I$, we can improve on Theorem \ref{thm4}, giving a lower bound with polynomial decay; see remarks following Lemma \ref{polydecay}.
A stronger version of Theorem \ref{thm4} for smoother functions can also be derived by an iterated argument, analogous to the adaptation of Theorems \ref{thm2a} and \ref{thm3a} from the proofs of Theorem \ref{thm2} and \ref{thm3}; we omit the details.

An interesting  question that remains open is whether estimates of the form 
\beq\label{Hfh}
 \|H f\|_{L^2(J)} \geq h\left(\frac{\left| f \right|_{\text{TV}}}{\|f\|_{L^2(I)}}\right)\|f\|_{L^2(I \cap J)}
 \eeq
are possible for a function $h$ that shows a decay that is slower than the quadratically exponential type in Theorems \ref{thm3} and \ref{thm4}, yet does not introduce a differential operator such as $L_I$. Note that (\ref{Hfh}) would give a lower bound on $\| H f\|_{L^2(J)}$ with respect to $\|f\|_{L^2(I \intersect J)}$ instead of $\| f \|_{L^2(I)}$, which is why we could expect such a function $h$ to decay slower than in Theorems  \ref{thm3} and \ref{thm4}: if $f$ is mainly supported on $I \backslash J$, i.e., away from the overlap, we will most likely not be able to improve on the conjecture \eqref{thm2_ineq_conj}. If, however, $f$ has a significant portion of its $L^2$--mass inside the overlap $I \intersect J,$ then $\|H f\|_{L^2(J)}$ cannot be too small. In terms of a possible stability estimate this implies that a regularization method guarantees good recovery only within the overlap $I \intersect J.$ Such a stability estimate would be of particular interest, since in practice one only aims at reconstruction within the overlap (i.e. the region of interest).\\

\subsection{A word on the proofs}
 We note in advance that the results of Theorems \ref{thm2} to \ref{thm3a} are such that the statements for the truncated Hilbert transform with overlap  follow from the corresponding statement for the truncated Hilbert transform with a gap. Indeed, in Case 4, since  $J \not\subset I$, we can always find an interval 
$J^* \subset J$ such that $I$ and $J^{*}$ are disjoint. Trivially, however,
\beq\label{JJ*}
 \|Hf\|_{L^2(J)} \geq \|Hf\|_{L^2(J^*)},
 \eeq
so that a lower bound for $\|Hf\|_{L^2(J^*)}$ suffices.
Therefore, in our proofs of Theorems \ref{thm2} to \ref{thm3a}, we may restrict ourselves to Case 3, i.e., the truncated Hilbert transform with a gap.

\section{Proof of Theorem \ref{thm1}} 

\begin{proof}
Consider all $g \in BV(I)$ and define for each such $g$ the corresponding function $\tilde{g} = g/\| g\|_{L^2(I)}$, so that $| \tilde{g}|_{TV} = |g|_{TV}/\|g \|_{L^2(I)}$. We will show that for any fixed $\kappa>0$ if we consider all such normalized $\tilde{g}$ for which $|\tilde{g}|_{TV} \leq \kappa$, then there exists $c_0(\kappa)>0$ such that 
\beq\label{H_nonzero}
 \| H \tilde{g} \|_{L^2(J)} \geq c_0(\kappa).
 \eeq
From this we may conclude that there exists a positive-valued function $h$ such that 
\[  \| H g \|_{L^2(J)} \geq h \left( \frac{ | g |_{TV}}{\|g \|_{L^2 (I)}} \right) \|g \|_{L^2(I)}.\]

The proof of (\ref{H_nonzero}) will proceed by contradiction. We begin by assuming the existence of a sequence $f_n \in BV(I)$ that has uniformly bounded variation $|f_n|_{TV} \leq \kappa$, uniform norm $\| f_n\|_{L^2(I)} =1$, and such that $\| H f_n\|_{L^2(J)}$ is not bounded below, i.e., 
\begin{equation}\label{Hf-to-zero}
 \lim_{n \rightarrow \infty} {\|Hf_n\|_{L^2(J)}} = 0.
\end{equation}
\textbf{Step 1.} The first step of the proof consists of showing that these assumptions imply the uniform boundedness of $f_n$, more precisely that the following holds:
\begin{equation}\label{limsup-fn}
\limsup_{n \rightarrow \infty}{\|f_n\|_{L^{\infty}(I)}} \leq \kappa +  |I|^{-\frac{1}{2}}.
\end{equation}
Suppose that for some index $N$ and some $\varepsilon \in (0,|I|^{-\frac{1}{2}})$, we have $\|f_n \|_{L^{\infty}(I)} \leq \kappa+\varepsilon$ for all $n \geq N$. 
Then, we have found a sequence that is uniformly bounded with the above bound in \eqref{limsup-fn}. If such an index $N$ does not exist, we can find a subsequence $f_{n_k}$ such that
$$\| f_{n_k}\|_{L^{\infty}(I)} > \kappa + \varepsilon.$$
This together with the assumed bound on $|f_n|_{TV}$, requires that $f_{n_k}$ does not change sign. Suppose w.l.o.g. that $f_{n_k} > 0$. Then, $\left| f_{n_k} \right|_{TV} \leq \kappa$ implies
$$0 < \| f_{n_k} \|_{L^{\infty}(I)} - \kappa \leq f_{n_k}(x), \quad x \in I.$$
Hence,
\[\int_I (\| f_{n_k} \|_{L^{\infty}(I)} - \kappa )^2 dx \leq \int_I f_{n_k}(x)^2 dx ,\]
which shows that
\[(\| f_{n_k} \|_{L^{\infty}(I)} - \kappa )^2 \cdot |I|  \leq 1,\]
and therefore
\[ \| f_{n_k} \|_{L^{\infty}(I)} \leq  \kappa + |I|^{-\frac{1}{2}}.\]

\textbf{Step 2.} This step relies on Helly's selection theorem\index{Helly's selection theorem}, which is a compactness theorem for $BV_{loc}.$
Let $\Omega \subset \mathbb{R}$ be an open set and $f_n:\Omega \rightarrow \mathbb{R}$ a sequence of
functions with
$$\sup_{n \in \mathbb{N}}{\left(\|f_n\|_{L^1(\Omega)} + \left\| \frac{d}{dx}f_n\right\|_{L^1(\Omega)}\right)} < \infty,$$
where the derivative is taken in the sense of tempered distributions. 
Then there exists a subsequence $\{f_{n_k}\}$ and a function $f \in BV_{loc}(\Omega)$ such that $f_{n_k}$ converges to $f$ pointwise and in $L_{loc}^1(\Omega)$. Moreover, $|f|_{TV} \leq \liminf_{n \to \infty} |f_n|_{TV}.$
Applying Helly's selection theorem to our sequence $\{f_n\}$ implies the existence of a subsequence $\{ f_{n_k} \},$ such that their pointwise limit $f$ is in $BV(I).$
 Furthermore, the uniform boundedness established in Step 1 yields that for each $n_k$, 
$$|f_{n_k}(q)| \leq\| f_{n_k}\|_{L^{\infty}(I)} \leq \kappa + |I|^{-\frac{1}{2}}.$$
 Moreover, the dominated convergence theorem implies that the uniform boundedness of $f_{n_k}$ and $f$, together with their pointwise convergence to $f$ results in convergence in the $L^2$-sense, i.e. 
\beq\label{limit0}
\| f_{n_k} - f \|_{L^2(I)} \to 0.
\eeq
We recall the simple observation that the truncated Hilbert transform remains bounded on $L^2$, since 
\[  \| Hf\|_{L^2(J)} = \| \mathcal{P}_J H f\|_{L^2(\R)} \leq \|  H f\|_{L^2(\R)} = \| f\|_{L^2(I)}.\]
Consequently, from (\ref{limit0}) we deduce
$$\| H f_{n_k} - H f \|_{L^2(J)} \leq \| f_{n_k} - f \|_{L^2(I)} \to 0.$$
Combining this with \eqref{Hf-to-zero} yields $\| H f\|_{L^2(J)}=0.$
Lemma 5.1. in \cite{reema1} states that if $f \in L^2(I)$ and $Hf$ vanishes on an open subset away from $I$, then $f \equiv 0.$ This contradicts the assumption $\| f_n \|_{L^2(I)}=1$ and hence completes the proof.
\end{proof}

\section{A differential operator}\label{sec:diff-operator}
Our proofs of the remaining theorems make essential use of the singular value decomposition of $H_T$. Using an old idea of Landau, Pollak and Slepian \cite{landau1961, landau1962, slepian1961} (and later of Maass in the context of tomography \cite{ma})
in the form of Katsevich \cite{kat, kat2}, we use an explicit differential operator to establish
a connection to the singular value expansion. The explicit form of the involved operators will allow
us to deduce that if $\|f_x\|_{L^2(I)}$ is small, then there is some explicit part of the $L^2$--norm of
$f$ that is comprised of singular functions associated to the largest singular values.\\

Let $H_T$ denote the truncated Hilbert transform with a gap, so that we may assume that $J = (a_1, a_2)$ and $I = (a_3, a_4)$ for real numbers $a_1<a_2<a_3<a_4$. 
Let $\{\sigma_n ; u_n, v_n\}$ be the singular value decomposition of $H_T$. Note that, by definition, for $\|f\|_{L^2(I)} = 1$,
$$\| H_T f \|_{L^2(J)}^2 = \sum_{n=0}^{\infty}{ |\langle f,u_n \rangle |^2 \sigma_n^2 }.$$
Following Katsevich \cite{kat}, we define the differential form 
$$ (L \psi)(x):= (P(x)\psi_x(x))_x+2 (x-\sigma)^2 \psi(x),$$
where
$$
P(x) = \prod_{i=1}^4 (x-a_i) \qquad \mbox{and} \qquad  \sigma = \frac{1}{4}\sum_{i=1}^4 a_i.
$$
For a correct definition of an unbounded operator it is necessary to indicate the domain it is acting on, as unbounded operators cannot be defined on all of $L^2$ (Hellinger--Toeplitz theorem). Therefore, we let $AC_{loc}(I)$ denote the space of locally absolutely continuous functions on $I$ and define the domains 
\begin{equation*}
D_{\max} := \{ \psi: I \rightarrow \mathbb{C}: \psi, P \psi_x \in AC_{loc}(I); \psi, L \psi \in L^2(I) \}
\end{equation*} 
and 
$$\mathcal{D} = \{ \psi \in D_{\max}:  P(x) \psi_x(x) \to 0 \text{ for } x \to a_3^+, x \to a_4^- \}.$$ 
We let $L_I$ be the restriction of $L$ to the domain $\mathcal{D}$ and note that $L_I$ is a self-adjoint operator \cite{zettl}.

Then, as shown in \cite{kat}, a commutation property of $L_{I}$ with $H_T$ proves that the functions $\{u_n\}$ form an orthonormal basis of $L^2(I)$ and that they are the eigenfunctions of $L_{I}$, that is $L_{I} u_n = \lambda_n u_n$ with $\lambda_n$ being the $n$-th eigenvalue of $L_{I}$. In addition, the asymptotic behavior as $n \to \infty$ of the eigenvalues $\lambda_n$ of $L_{I}$  as well as that of the singular values $\sigma_n$ of $H_T$ is known. Katsevich \& Tovbis \cite{kat3}
have given the asymptotics as $n \to \infty$ including error terms, from which we can deduce that for all $n \in \mathbb{N}$
\begin{align}
 \lambda_n &\geq k_1 n^2 \label{lamnbd} \\
 \sigma_n & \geq e^{-k_2 n}, \label{signbd}
\end{align}
where $k_1, k_2 > 0$ depend only on the intervals $I$ and $J$. 

We must also consider the $M$-th iterate $L_I^M$ of $L_I.$ For $M \in \mathbb{N},$ let $D(L_I^M)$ denote the domain of the self-adjoint operator $L_I^M.$ Then, we define the following sets of functions in $L^2(I):$
\begin{equation*}
A_M = \{ f \in L^2(I): f \in D(L_I^{M+1}) \text{ and } L_I^M f \in H^1(I)\}.
\end{equation*}
We note that these classes of functions are dense in $L^2(I)$ and that $C_c^{2M+1}(I)$ is a subset of $A_M$. Also, $A=\bigcap_{M=1}^{\infty} A_M$ is dense in $L^2(I)$ and $C_c^{\infty}(I) \subset A.$\\

\textit{Remark.} The asymptotics in the results of Katsevich \& Tovbis \cite{kat3} are actually more precise than stated. In particular, setting
$ I = (a_3, a_4)$ and $J = (a_1,a_2)$ and
\begin{align*}
 K_{+} &= \frac{\pi}{\sqrt{(a_4-a_2)(a_3-a_1)}} {}_{2}F_{1}\left(\frac{1}{2}, \frac{1}{2}, 1;
\frac{(a_3-a_2)(a_4-a_1)}{(a_4-a_2)(a_3-a_1)}\right) \\
 K_{-} &= \frac{\pi}{\sqrt{(a_4-a_2)(a_3-a_1)}} {}_{2}F_{1}\left(\frac{1}{2}, \frac{1}{2}, 1;
\frac{(a_2-a_1)(a_4-a_3)}{(a_3-a_1)(a_4-a_2)}\right),
\end{align*}
where ${}_{2}F_{1}$ is the hypergeometric function, Katsevich \& Tovbis derive
\begin{align}
 \lambda_n &= \frac{\pi^2}{K_{+}^2} n^2 (1+o(1)) \nonumber \\
 \sigma_n &= e^{- \pi \frac{K_+}{K_-} n}(1+o(1)) \nonumber
\end{align}
for sufficiently large $n$. These asymptotic relations allow one to state (\ref{lamnbd}) and (\ref{signbd}) for all $n \geq N_0$, for some $N_0 \in \mathbb{N}$ depending on $I$ and $J$. One can then find explicit constants $k_1$ and $k_2$ depending on $I$ and $J$ such that relations (\ref{lamnbd}) and (\ref{signbd}) hold for all $n \in \mathbb{N}$. Similarly, exploiting the asymptotics one can derive an upper bound of the form 
\begin{equation}\label{signupbd}
\sigma_n \leq \tilde{K}_2 e^{-K_2 n}
\end{equation}
with $K_2, \tilde{K}_2$ depending only on $I$ and $J$.

\section{Proof of Theorems \ref{thm2} and \ref{thm2a}}\label{sec:Proof-Thm2}

We now turn to the proof of Theorem \ref{thm2}, for which we exploit the following density argument. Since $H^2(I)$ is dense in $H^1(I)$ w.r.t. the $H^1$ topology and, as can be easily verified, $H^2(I) \subset \mathcal{D},$ one can conclude that $H^1(I) \cap \mathcal{D}$ is dense in $H^1(I)$ w.r.t. the $H^1$ topology. Thus, it suffices to prove the statement of Theorem \ref{thm2} for functions $g$ in $H^1(I) \cap \mathcal{D}$; for each such function we normalize it to $\tilde{g} = g/\|g \|_{L^2(I)}$, so that to prove the theorem it would suffice to show that 
\[ \|H \tilde{g} \|_{L^2(J)} \geq c_1 \exp(-c_2 \| \tilde{g}_x \|_{L^2(I)}).\]

We now therefore assume we have $f \in H^1(I) \cap \mathcal{D}$ with $\| f \|_{L^2(I)} =1$.
Integration by parts yields that for $f \in H^1(I) \cap \mathcal{D}$,

\begin{align*}
\langle L_I f, f \rangle &= - \int_{a_3}^{a_4} P(x) f_x(x)^2 dx   + (P(x) f_x(x)) f(x) \Big|_{a_3}^{a_4} + \int_{a_3}^{a_4} 2 (x-\sigma)^2 f(x)^2 dx\\
&= - \int_{a_3}^{a_4} P(x) f_x(x)^2 dx  + \int_{a_3}^{a_4} 2 (x-\sigma)^2 f(x)^2 dx 
\end{align*}          
so that 
      \begin{align}
|\langle L_I f, f \rangle |
&\leq \|P\|_{L^{\infty}(I)} \|f_x\|_{L^2(I)}^2 + 2(a_4-a_1)^2 \|f\|_{L^2(I)}\nonumber \\
&\leq  k_3\|f_x\|_{L^2(I)}^2 + k_3, \nonumber
\end{align}                          
for some constant $k_3 > 0$ depending only on $I$ and $J$. Altogether, we thus have
\begin{equation}\label{L_upper}
  \sum_{n=0}^{\infty} |\langle f,u_n \rangle |^2 \lambda_n = | \langle L_I f, f \rangle | \leq k_3\|f_x\|_{L^2(I)}^2 + k_3.
\end{equation}
Hence for any $N \geq 1$, it follows from the asymptotic behavior $\lam_n \geq k_1n^2$ that
\begin{eqnarray}
 1 = \| f\|_{L^2(I)}^2 &=&  \sum_{n=0}^{N} |\langle f,u_n \rangle |^2 +  \sum_{n=N+1}^{\infty} |\langle f,u_n \rangle |^2 \nonumber \\
 &  \leq &  \sum_{n=0}^{N} |\langle f,u_n \rangle |^2 +  \sum_{n=N+1}^{\infty} |\langle f,u_n \rangle |^2  \frac{\lam_n}{k_1n^2} \nonumber \\
  &  \leq &  \sum_{n=0}^{N} |\langle f,u_n \rangle |^2 + k_1^{-1}N^{-2} \sum_{n=N+1}^{\infty} |\langle f,u_n \rangle |^2  \lam_n ,\label{iterate_1} 
 \end{eqnarray}
 so that by (\ref{L_upper}),
 \[ \sum_{n=0}^N |\langle f,u_n \rangle |^2 \geq 1- k_3 k_1^{-1}N^{-2} \left( \| f_x \|_{L^2(I)}^2 +1\right).\]
 Hence choosing the least integer $N$ such that 
 \[ N^2  \geq 2 k_3 k_1^{-1} (\|f_x\|_{L^2(I)} + 1)^2 \geq 2  k_3k_1^{-1}(\|f_x\|_{L^2(I)}^2 + 1),\]
and setting $k_4=(2k_3 k_1^{-1})^{1/2}$ yields
$$  \sum_{n \leq \lceil k_4 (\|f_x\|_{L^2(I)}+1)\rceil }  |\langle f,u_n \rangle |^2 \geq \frac{1}{2}.$$
Then, however,
\begin{align*}
 \| H_T f \|_{L^2(J)}^2 &=  \sum_{n=0}^{\infty}{ |\langle f, u_n \rangle |^2 \sigma_n^2 }   \\
&\geq  \sum_{n \leq \lceil k_4(\|f_x\|_{L^2(I)}+1) \rceil}{ |\langle f, u_n \rangle |^2 \sigma_n^2 }        \\
&\geq  \left( \sum_{n \leq \lceil k_4(\|f_x\|_{L^2(I)}+1) \rceil}{ |\langle f, u_n \rangle |^2  } \right) \sigma_{ \lceil k_4 (\|f_x\|_{L^2(I)}+1) \rceil}^2 \\
&\geq  \frac{1}{2} e^{-2 k_2 k_4 \|f_x\|_{L^2(I)}-2k_2(k_4+1)} \geq     k_5 e^{-2 k_2 k_4\|f_x\|_{L^2(I)}}                                                      
\end{align*}

for some constant $k_5>0$, as desired.

We now modify the above argument to prove the stronger result of Theorem \ref{thm2a} when $f \in A_M$ for some arbitrary $M \in \mathbb{N}$.
We start with the observation that for $\|f\|_{L^2(I)}=1$ and any $N \geq 1$,
\begin{eqnarray}
 1 &=&  \sum_{n=0}^{N} |\langle f,u_n \rangle |^2 +  \sum_{n=N+1}^{\infty} |\langle f,u_n \rangle |^2 \nonumber \\
 &  \leq &  \sum_{n=0}^{N} |\langle f,u_n \rangle |^2 +  \sum_{n=N+1}^{\infty} |\langle f,u_n \rangle |^2 \left( \frac{\lam_n}{k_1n^2} \right)^{2M+1} \nonumber \\
  &  \leq &  \sum_{n=0}^{N} |\langle f,u_n \rangle |^2 + (k_1 N^2)^{-(2M+1)} \sum_{n=N+1}^{\infty} |\langle f,\lam_n^M u_n \rangle |^2 \lam_n \nonumber \\
    & = &  \sum_{n=0}^{N} |\langle f,u_n \rangle |^2 +(k_1 N^2)^{-(2M+1)}\sum_{n=N+1}^{\infty} |\langle f,L_I^M u_n \rangle |^2  \lam_n\nonumber \\
  &  = &  \sum_{n=0}^{N} |\langle f,u_n \rangle |^2 + (k_1 N^2)^{-(2M+1)}\sum_{n=N+1}^{\infty} |\langle L_I^M f,u_n \rangle |^2  \lam_n .\label{iterate_1a'} 
 \end{eqnarray}

 Now we recall that there is a constant $k_3>0$ such that for $g \in \mathcal{D} \intersect H^1(I)$, 
 \[  \sum_{n=0}^\infty |\langle g,u_n \rangle|^2 \lam_n = |\langle L_I g,g \rangle| \leq k_3 \|g_x \|_{L^2(I)}^2 + k_3.\]
 We apply this with $g= L_I^M f$, to conclude from (\ref{iterate_1a'}) that 
 \[   \sum_{n=0}^{N} |\langle f,u_n \rangle |^2 \geq 1 - 
 (k_1 N^2)^{-(2M+1)} k_3 (\| (L_I^M f)_x \|^2_{L^2(I)}+1).\]
 Hence choosing the least integer $N$ such that 
 \begin{align*} N  &\geq \big(2 k_3k_1^{-(2M+1)}(\|(L_I^M f)_x \|_{L^2(I)} + 1)^2\big)^{\frac{1}{2(2M+1)}} \\
& \geq \big(2 k_3k_1^{-(2M+1)}(\|(L_I^M f)_x \|_{L^2(I)}^2 + 1)\big)^{\frac{1}{2(2M+1)}},
\end{align*}
 we see that for $k_4 =(2k_3)^{1/2(2M+1)}k_1^{-1/2},$
$$  \sum_{n \leq \lceil k_4 (\| (L_I^M f)_x \|_{L^2(I)}^{1/(2M+1)}+1) \rceil} |\langle f,u_n \rangle |^2 \geq \frac{1}{2}.$$
As before, we now obtain  a lower bound
\begin{align*}
 \| H_T f \|_{L^2(J)}^2 &=  \sum_{n=0}^{\infty}{ |\langle f, u_n \rangle |^2 \sigma_n^2 }   \\
&\geq  \sum_{n \leq \lceil k_4 (\| (L_I^M f)_x \|_{L^2(I)}^{1/(2M+1)}+1) \rceil}{ |\langle f, u_n \rangle |^2 \sigma_n^2 }        \\
&\geq  \left( \sum_{n \leq \lceil k_4 (\| (L_I^M f)_x \|_{L^2(I)}^{1/(2M+1)}+1) \rceil}{ |\langle f, u_n \rangle |^2  } \right) \sigma_{\lceil k_4(\| (L_I^M f)_x \|_{L^2(I)}^{1/(2M+1)}+1)\rceil}^2 \\
&\geq  \frac{1}{2} \exp(-2 k_2 k_4 \| (L_I^M f)_x \|_{L^2(I)}^{1/(2M+1)}-2k_2(k_4+1)) \\
&  \geq     k_5 \exp(-2 k_2 k_4\|(L_I^M f)_x\|_{L^2(I)}^{1/(2M+1)})                                                    
\end{align*}
for some constant $k_5=(1/2)e^{-2k_2(k_4+1)}$. We need only note that as $M \maps \infty$, $k_4 \maps k_1^{-1/2}$, and $k_5 \maps (1/2)e^{-2k_2(k_1^{-1/2}+1)}$, both positive finite limits.

\section{Proof of Theorems \ref{thm3} and \ref{thm3a}.} \label{sec:Proof-Thm3}
It is well known that smoothness of a function $f:\mathbb{T}\rightarrow \mathbb{R}$ translates into decay of the Fourier coefficients $\hat{f}(n)$. This statement is usually proven using integration by parts; in particular, $f \in C^k$ yields $|\hat{f}(n)| \leq C_{f^{(k)},k}n^{-k}$. However, it is easy to see that
for $k=1$ it actually suffices to require $f$ to be of bounded variation: this observation dates back at least to a paper from 1967 (but is possibly quite a bit older) of Taibleson \cite{tai},
who showed that
$$ |\hat{f}(n)| \leq 2\pi \frac{\left| f \right|_{\text{TV}}}{n}.$$
We will show the analogous statement with the Fourier system replaced by the singular functions $u_n$ of the operator $L_I$; the argument exploits an asymptotic expression and,
implicitly, Abel's summation formula as a substitute for integration by parts.

\begin{lemma}\label{lemma_un}
Let $I$ and $J$ be disjoint finite open intervals on $\R$. There exists $c > 0$ depending only on the intervals $I, J$ such that for any $f$ of bounded variation that is supported on $I$ and vanishes at the boundary of the interval $I$,
$$ \left| \left\langle f, u_n\right\rangle \right| \leq c\frac{\left| f \right|_{\text{TV}}}{n}.$$
\end{lemma}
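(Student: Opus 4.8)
The plan is to prove this by analogy with the classical Taibleson estimate for Fourier coefficients, replacing the role of the exponentials $e^{2\pi i n x}$ with the eigenfunctions $u_n$ of the operator $L_I$. The classical proof writes $\hat f(n) = \int f(x) e^{-2\pi i n x}\,dx$, integrates by parts (using that $f$ is of bounded variation, so $df$ is a finite measure), and exploits the fact that the antiderivative $\int^x e^{-2\pi i n t}\,dt = (e^{-2\pi i n x})/(-2\pi i n)$ is uniformly bounded by a constant times $1/n$. So the key structural input I would seek is an analogous statement for the $u_n$: namely that the antiderivative $U_n(x) := \int_{a_3}^x u_n(t)\,dt$ is bounded uniformly in $x$ by $c/n$ for some constant depending only on $I,J$. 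Given such a bound, the lemma follows almost immediately.

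First I would set up the integration by parts. Since $f$ is of bounded variation and vanishes at the endpoints of $I=(a_3,a_4)$, one has
$$ \langle f, u_n\rangle = \int_{a_3}^{a_4} f(x)\,u_n(x)\,dx = \int_{a_3}^{a_4} f(x)\,dU_n(x) = \Big[ f(x)U_n(x)\Big]_{a_3}^{a_4} - \int_{a_3}^{a_4} U_n(x)\,df(x).$$
The boundary term vanishes at $a_3$ because $U_n(a_3)=0$, and at $a_4$ because $f(a_4)=0$ (here the hypothesis that $f$ vanishes at the boundary is exactly what makes the boundary terms disappear). This leaves
$$ \big|\langle f, u_n\rangle\big| = \Big| \int_{a_3}^{a_4} U_n(x)\,df(x)\Big| \leq \|U_n\|_{L^\infty(I)}\,|f|_{\mathrm{TV}},$$
since the total variation measure $|df|$ has total mass $|f|_{\mathrm{TV}}$. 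Thus the whole lemma reduces to establishing the uniform bound $\|U_n\|_{L^\infty(I)} \leq c/n$.

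The main obstacle, therefore, is controlling the antiderivative $U_n$ of the eigenfunction $u_n$; this is where I expect the real work to lie, and where the hint about ``an asymptotic expression and, implicitly, Abel's summation formula'' comes in. Unlike the Fourier case, the $u_n$ are not explicit exponentials, so I cannot simply integrate them in closed form. Instead I would use the known asymptotic description of the eigenfunctions of $L_I$ (a second-order Sturm--Liouville operator of the form $(P\psi_x)_x + 2(x-\sigma)^2\psi$), which, after a Liouville-type change of variables, should behave like a WKB oscillation $u_n(x) \approx A(x)\cos(n\,\theta(x) + \varphi(x))$ with $\theta$ essentially the accumulated phase and with $n$-spacing governed by the eigenvalue asymptotics $\lambda_n \sim k n^2$. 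The integral of such an oscillatory expression gains a factor $1/n$ from the rapidly oscillating phase, which is precisely the source of the claimed decay; Abel summation (or a discrete summation-by-parts against the oscillatory factor) serves as the rigorous substitute for ``integrating the oscillation'' when one only has asymptotic rather than exact control of the $u_n$. The delicate points will be ensuring the constant is uniform in $n$ down to small $n$ (handled by absorbing finitely many terms into the constant) and controlling the error terms in the asymptotic expansion uniformly over $x\in I$, so that they do not degrade the gain of $1/n$.
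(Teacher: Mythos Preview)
Your proposal is correct and follows essentially the same route as the paper: reduce via integration by parts (the paper passes to $C^1$ by density, you work directly with the BV measure $df$, which is equivalent) to the antiderivative estimate $\sup_{x\in I}\bigl|\int_{a_3}^x u_n(z)\,dz\bigr|\le c/n$, and then obtain that estimate from the WKB asymptotics of the $u_n$. The only point the paper treats more explicitly than your sketch is the behavior near the endpoints $a_3,a_4$, where the WKB form breaks down and one must instead use a Bessel-$J_0$ approximation on an $\mathcal{O}(\varepsilon^{1+2\delta})$ neighborhood to verify the $c/n$ bound there as well.
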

\begin{proof}
 We may assume w.l.o.g. by density that $f \in C^1$ (or, alternatively, replace every
integral by summation, and integration by parts by Abel's summation formula). Let $I=(a_3,a_4).$ It suffices to show that
\begin{equation}\label{int-un}
 \forall~x\in (a_3, a_4): \qquad  \left| \int_{a_3}^{x}{u_n(z)dz}\right| \leq \frac{c}{n}.
\end{equation}
Once this is established (see the appendix in \S \ref{sec_int-un} for the proof of the above statement), we can write
\begin{align*}
\left| \int_{a_3}^{a_4}{f(x) u_n(x) dx} \right|&= \left| \int_{a_3}^{a_4}{f(x) \left(\int_{a_3}^{x}{u_n(z)dz}\right)_x dx} \right| \\
&= \left| \int_{a_3}^{a_4}{f_x(x) \left(\int_{a_3}^{x}{u_n(z)dz}\right) dx} \right| \\
&\leq \sup_{a_3 \leq x \leq a_4}{\left| \int_{a_3}^{x}{u_n(z)dz}\right|}\int_{a_3}^{a_4}{|f_x(x)|dx},
\end{align*}
in which the boundary terms vanish by the assumption on $f$.
\end{proof}

\subsection{Proof of Theorem \ref{thm3}}
This section is split into two parts: we first assume that there exists a point $x_0 \in I$ such that $f(x_0) = 0$, and argue using that property. The second part of the section is completely independent
and establishes a stronger result in the case that $f$ does not change sign.

In the first case, given $g \in W^{1,1}(I)$ we consider the normalization $\tilde{g} = g/ \|g \|_{L^2(I)}$, so that it would suffice to show that under the hypotheses of Theorem \ref{thm3},
\beq\label{Hgtilde}
\| H \tilde{g} \|_{L^2(I)} \geq c_1 \exp (-c_2 |\tilde{g}|^2_{TV} ).
\eeq
Thus we now consider $f \in W^{1,1}(I)$  with $\|f\|_{L^2(I)} =1$ and such that $f$ vanishes at least at one point in $I$. If $f$ vanishes at the endpoints of $I$, we may apply Lemma \ref{lemma_un} directly; otherwise we use Lemma \ref{lemma_approx} to approximate $f \in BV(I)$ by a sequence of $f_n \in C_c^\infty(I)$ (in particular, vanishing at the boundary of $I$) such that $\|f_n - f\|_{L^2(I)} \maps 0$ and $|f_n|_{TV} \leq 3 |f|_{TV}$. Then if we prove (\ref{Hgtilde}) for each $f_n$ we can conclude it holds for $f$, since 
\beq\label{f_approx}
 c_1 \exp (-9c_2 |f|^2_{TV} ) \leq  \| H f_n\|_{L^2(J)} \leq  \| H f\|_{L^2(J)}  +   \| H (f-f_n)\|_{L^2(J)} ,
 \eeq
and 
$$\| H (f-f_n)\|_{L^2(J)} \leq C \|f - f_n\|_{L^2(I)} \maps 0 \quad \mbox{as} \quad n \maps \infty.$$
We may now assume that $f$ vanishes at the boundary of $I$, and note that by Lemma \ref{lemma_un},
\begin{align*}
 1 = \|f\|_{L^2(I)}^2
&= \sum_{n=0}^{N}{\left| \left\langle f, u_n \right\rangle\right|^2}  +  \sum_{n=N+1}^{\infty}{\left| \left\langle f, u_n \right\rangle\right|^2}\\
&\leq \sum_{n=0}^{N}{\left| \left\langle f, u_n \right\rangle\right|^2} + c^2 \left| f \right|_{\text{TV}}^2\sum_{n=N+1}^{\infty}{\frac{1}{n^2}} \\
&\leq  \sum_{n=0}^{N}{\left| \left\langle f, u_n \right\rangle\right|^2} + \frac{c^2 \left| f \right|_{\text{TV}}^2}{N}.
\end{align*}
This implies that at least half of the $L^2$--mass is contained within the first $N = \lceil 2 c^2 \left| f \right|_{\text{TV}}^2 \rceil$ frequencies. The
remainder of the argument can be carried out as in Theorem \ref{thm2}.

It remains to show that we can actually restrict ourselves to the case where $f(x_0) = 0$ for some $x_0 \in I$. Assume now that we are in Case 3 (the argument
for Case 4 follows completely analogously by reducing it to Case 3 using (\ref{JJ*})). It is not difficult to see that we get a much stronger inverse inequality (with a polynomial instead
of a superexponential decay).
\begin{lemma}\label{polydecay} Let $I, J$ be as in Case 3 and assume that $f$ has no root on $I$. Then,
$$ \| Hf \|_{L^2(J)} \geq  \frac{ |J|^{\frac{1}{2}}}{\sup_{x \in I, y \in J}{|x-y|}}  \left(\frac{|f|^2_{TV}}{\|f\|^2_{L^2(I)}}+\frac{4}{|I|} \right)^{-1/2} \|f\|_{L^2(I)}.$$
\end{lemma}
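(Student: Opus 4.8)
The plan is to exploit the fact that for disjoint intervals the Hilbert kernel does not change sign, so a function of constant sign produces \emph{no cancellation} in $Hf$ on $J$. Since $f$ has no root on $I$ we may assume $f>0$ (replacing $f$ by $-f$ changes neither side). Write $D:=\sup_{x\in I, y\in J}|x-y|$. For $x \in J$ and $y\in I$ the quantity $x-y$ has a fixed sign, and since $x\notin I$ no principal value is needed, so
$$ |(Hf)(x)| = \frac{1}{\pi}\int_I \frac{f(y)}{|x-y|}\,dy \geq \frac{1}{\pi D}\int_I f(y)\,dy = \frac{\|f\|_{L^1(I)}}{\pi D}.$$
Squaring and integrating over $J$ gives the clean lower bound
$$ \|Hf\|_{L^2(J)} \geq \frac{|J|^{1/2}}{\pi D}\,\|f\|_{L^1(I)},$$
which already has the right shape, except that it involves $\|f\|_{L^1(I)}$ in place of the quantity appearing in the statement.

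The second step converts the $L^1$ norm into the combination of $\|f\|_{L^2(I)}$ and $|f|_{TV}$. Because $f$ has constant sign, its oscillation is controlled by its variation: writing $m=\inf_I f$ and $M=\sup_I f$ one has $M-m \leq |f|_{TV}$, while $m \leq |I|^{-1}\|f\|_{L^1(I)}$ since the infimum is at most the average. Hence $\|f\|_{L^\infty(I)} \leq |I|^{-1}\|f\|_{L^1(I)} + |f|_{TV}$, and therefore
$$ \|f\|_{L^2(I)}^2 = \int_I f^2 \leq \|f\|_{L^\infty(I)}\,\|f\|_{L^1(I)} \leq \frac{\|f\|_{L^1(I)}^2}{|I|} + |f|_{TV}\,\|f\|_{L^1(I)}.$$
Abbreviating $A:=\|f\|_{L^1(I)}$, $B:=\|f\|_{L^2(I)}$, $V:=|f|_{TV}$, this reads $A^2/|I| + VA - B^2 \geq 0$; taking the positive root and rationalizing $\sqrt{V^2+4B^2/|I|}-V$ yields $A \geq 2B^2\bigl(V+\sqrt{V^2+4B^2/|I|}\,\bigr)^{-1}$. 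Using the trivial bound $\sqrt{V^2+4B^2/|I|}\geq V$ in the denominator gives
$$ \|f\|_{L^1(I)} \geq \frac{B^2}{\sqrt{V^2+4B^2/|I|}} = \left(\frac{|f|^2_{TV}}{\|f\|^2_{L^2(I)}}+\frac{4}{|I|}\right)^{-1/2}\|f\|_{L^2(I)},$$
which is precisely the factor required. Substituting this into the bound from the first step completes the argument.

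I do not anticipate a serious obstacle: the proof is entirely elementary, relying only on the sign-definiteness of the kernel and a one-variable quadratic estimate. The one place requiring care is the algebra of the second step—verifying that the correct root of $A^2/|I| + VA - B^2 \geq 0$ simplifies to the displayed form after rationalization. The normalizing constant $1/\pi$ arising from the definition of $H$ is absorbed into the final constant, which is then of the form stated in the Lemma.
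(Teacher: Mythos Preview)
Your argument is correct and matches the paper's proof essentially step for step: both bound $\|Hf\|_{L^2(J)}$ below by a constant times $\|f\|_{L^1(I)}$ via the sign-definiteness of the kernel, then convert $\|f\|_{L^1(I)}$ into the stated expression using $\|f\|_{L^2}^2 \le \|f\|_{L^\infty}\|f\|_{L^1}$, the bound $\|f\|_{L^\infty}\le |I|^{-1}\|f\|_{L^1}+|f|_{TV}$, and the quadratic inequality. The factor $1/\pi$ you flag is indeed present but is dropped without comment in the paper's own proof as well, so the stated constant in the Lemma is off by this factor in both treatments.
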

\begin{proof} We assume w.l.o.g. $\|f\|_{L^2(I)} = 1$. Since $I$ and $J$ do not overlap, we see that the kernel of the Hilbert transform has constant sign (which sign depends on whether $J$ is to the left or
to the right of $I$). Therefore, since $f$ never changes sign, we have by H\"older and monotonicity,
$$ \| Hf \|_{L^2(J)}  \geq  \frac{1}{|J|^{\frac{1}{2}}} \| Hf \|_{L^1(J)} \geq |J|^{\frac{1}{2}} \frac{1}{\sup_{x \in I, y \in J}{|x-y|}}  \|f\|_{L^1(I)}.$$
Let us now assume that 
$$ \|f\|_{L^1(I)} \leq \varepsilon.$$
Then, there certainly exists a point $x_0$ with $f(x_0) \leq \varepsilon/|I|$ and therefore
$$ \|f\|_{L^{\infty}(I)} \leq \frac{\varepsilon}{|I|} + |f|_{TV}.$$
As a consequence
$$ 1 = \int_{I}{f^2 dx} \leq \|f\|_{L^{\infty}(I)} \int_{I}{|f| dx}$$
and thus
$$ \| f \|_{L^1(I)} \geq \frac{1}{\frac{\varepsilon}{|I|} + |f|_{TV}}$$
from which we derive that
$$ \varepsilon \geq \frac{1}{\frac{\varepsilon}{|I|} + |f|_{TV}}.$$
This shows that $\varepsilon$ cannot be arbitrarily small depending on $|I|$ and $|f|_{TV}$ and simple algebra implies the stated result.
\end{proof}

\textit{Remark.} When $I,J$ are configured as in Case 4, repeating this argument shows that the result of Lemma \ref{polydecay} continues to hold, with the factor $|J|^{1/2}$ replaced by $\frac{1}{2}|J \setminus I|^{1/2}$. 
This argument may also be suitably adapted to show that if $f$ has no root on $I$ and $J^*$ is a subinterval of $J$ that is disjoint from $I$, then
\[ 
 \| Hf \|_{L^2(J)} \geq  \frac{1}{2}|J^* \setminus I |^{1/2}  \frac{1}{\sup\limits_{x \in J^{*}, y \in I} |x-y|} 
  \left(\frac{|\chi_{I \setminus J^*} f|^2_{TV}}{\|f \|^2_{L^2(I \setminus J^*)}} + \frac{4}{|I \setminus J^*|}\right)^{-1/2} \|f\|_{L^2(I \setminus J^*)}.
  \]
This result may be seen as a suitable counterpart to Theorem \ref{thm4}.
\subsection{Proof of Theorem \ref{thm3a}}
\begin{proof}
We now modify the argument used in the first part of the proof of Theorem \ref{thm3} to show Theorem \ref{thm3a}, in which case $f$ is assumed to be in $C_c^{2M+1}(I)$ for some arbitrary $M \geq 1.$
We note that under this strong assumption, which ensures that $f$ and all its first $2M+1$ derivatives vanish at the endpoints of $I$, it follows that $L_I^M f$ also vanishes at the endpoints of $I$. Thus we may apply Lemma \ref{lemma_un} directly to $L_I^M f$.

By  Lemma \ref{lemma_un} and the asymptotics for $\lam_n$,
\begin{align*}
 1 = \|f\|_{L^2(I)}^2 
&= \sum_{n=0}^{N}{\left| \left\langle f, u_n \right\rangle\right|^2}  +  \sum_{n=N+1}^{\infty}{\left| \frac{\lam_n^M}{\lam_n^M}\left\langle f, u_n \right\rangle\right|^2}\\
&= \sum_{n=0}^{N}{\left| \left\langle f, u_n \right\rangle\right|^2}  +  \sum_{n=N+1}^{\infty}{\left| \frac{1}{\lam_n^M}\left\langle f, \lam_n^Mu_n \right\rangle\right|^2}\\
&= \sum_{n=0}^{N}{\left| \left\langle f, u_n \right\rangle\right|^2}  +  \sum_{n=N+1}^{\infty}\frac{1}{\lam_n^{2M}}{\left| \left\langle f, L_I^M u_n \right\rangle\right|^2}\\
&= \sum_{n=0}^{N}{\left| \left\langle f, u_n \right\rangle\right|^2}  +  \sum_{n=N+1}^{\infty}\frac{1}{\lam_n^{2M}}{\left| \left\langle L_I^Mf, u_n \right\rangle\right|^2}\\
&\leq \sum_{n=0}^{N}{\left| \left\langle f, u_n \right\rangle\right|^2} + c^2 \left| L_I^Mf \right|_{\text{TV}}^2\sum_{n=N+1}^{\infty}{\frac{1}{(k_1n^2)^{2M} n^2}} \\
&\leq  \sum_{n=0}^{N}{\left| \left\langle f, u_n \right\rangle\right|^2} + \frac{c^2 k_1^{-2M}  \left| L_I^Mf \right|_{\text{TV}}^2}{N^{4M+1}}.
\end{align*}
We now choose $N$ to be the least integer such that 
\[ N \geq \left( 2c^2k_1^{-2M} \left| L_I^Mf \right|_{TV}^2 \right)^{\frac{1}{4M+1}},\]
so that with this choice, we may set $k_4 = (2c^{2}k_1^{-2M})^{1/(4M+1)}$ to obtain the lower bound
\begin{align*}
 \| H_T f \|_{L^2(J)}^2 &=  \sum_{n=0}^{\infty}{ |\langle f, u_n \rangle |^2 \sigma_n^2 }   \\
&\geq  \sum_{n\leq \lceil k_4 | L_I^M f |_{TV}^{2/(4M+1)}\rceil}{ |\langle f, u_n \rangle |^2 \sigma_n^2 }        \\
&\geq  \left( \sum_{n \leq \lceil k_4| L_I^M f |_{TV}^{2/(4M+1)}\rceil }{ |\langle f, u_n \rangle |^2  } \right) \sigma_{\lceil k_4| L_I^M f |_{TV}^{2/(4M+1)}\rceil}^2 \\
&\geq \frac{1}{2} \exp(-2 k_2 (k_4 | L_I^M f |_{TV}^{2/(4M+1)}+1)) \\
&  \geq  k_5 \exp(-2 k_2 k_4| L_I^M f |_{TV}^{2/(4M+1)})                         
\end{align*}
with $k_5=(1/2) e^{-2 k_2}$. 
We need only note that as $M \maps \infty$, $k_4 \maps k_1^{-1/2}$.

\end{proof}

\subsection{Proof of Corollary \ref{cor-stab}}

\begin{proof}
Let $f_1$ and $f_2$ be elements in $S(\delta,g^{\delta})$. From Theorem \ref{thm3} and $| f_1-f_2 |_{TV} \leq 2 \kappa$, we obtain
\begin{equation*}
\| f_1 - f_2 \|_{L^2(I)} \leq \frac{1}{c_1} e^{c_2 4 \kappa^2/\|f_1 - f_2\|^2_{L^2(I)}} \|H_T (f_1 - f_2) \|_{L^2(J)}.
\end{equation*}
Linearity of $H_T$ and the properties of $S$ then yield
\begin{align*}
\| f_1 - f_2 \|_{L^2(I)} &\leq \frac{1}{c_1} e^{c_2 4 \kappa^2/\|f_1 - f_2\|^2_{L^2(I)}} \|H_T f_1 - H_T f_2 \|_{L^2(J)} \\
& \leq  \frac{1}{c_1} e^{c_2 4 \kappa^2/\|f_1 - f_2\|^2_{L^2(I)}} ( \|H_T f_1-g^{\delta} \|_{L^2(J)} + \|g^{\delta} - H_T f_2 \|_{L^2(J)})  \\
& \leq \frac{1}{c_1} e^{c_2 4 \kappa^2/\|f_1 - f_2\|^2_{L^2(I)}} 2 \delta.
\end{align*}
This gives
\begin{equation*}
\log(\|f_1 - f_2\|_{L^2(I)}) - \frac{c_2 4 \kappa^2}{\|f_1 - f_2\|^2_{L^2(I)}} \leq \log \Big(\frac{2 \delta}{c_1}\Big).
\end{equation*}
A lower bound on the left-hand side of the above inequality can be obtained by observing that $x^2 \log |x| \geq -1/(2e)$ for real-valued $x$. Thus,
\begin{equation*}
-\frac{\frac{1}{2 e}+4 c_2 \kappa^2}{\| f_1-f_2 \|^2_{L^2(I)}} \leq \log \Big(\frac{2 \delta}{c_1}\Big).
\end{equation*}
Hence, if $\delta$ is not too large ($\delta \leq c_1/2$), we can conclude that
\begin{equation}\label{cor-bound}
\| f_1-f_2 \|_{L^2(I)} \leq \sqrt{\frac{\frac{1}{2e}+4 c_2 \kappa^2}{|\log (\frac{2\delta}{c_1})|}}.
\end{equation}

\end{proof}

\section{Proof of Theorem \ref{thm4}.}\label{sub:proof-thm4}

We recall that Theorem \ref{thm4} considers Case 4, with $I \intersect J \neq \emptyset.$
Let $I = (a_2,a_4)$ and $J = (a_1,a_3)$ for $a_1<a_2<a_3<a_4$ and let the subinterval $J^*$ of $J$ be defined as $[a_1+\mu,a_3-\mu]$ for some $\mu>0$ sufficiently small so that $a_1 + \mu < a_2< a_3 - \mu$. We think of $J^*$ as now being fixed for the remainder of the argument.
 For the two accumulation points of the singular values of $H_T$ (the truncated Hilbert transform with overlap), we use the convention $\sigma_n \to 1$ for $n \to -\infty$ and $\sigma_n \to 0$ for $n \to \infty$. The two main ingredients needed for the statement in Theorem \ref{thm4} are the existence of positive constants $B_{\mu}$, $\beta_{\mu}$ and $c$ depending only on $I$, $J$ and $\mu$ such that the following holds for all $n \in \mathbb{N}$: 

\begin{enumerate}
\item $\|  u_n\|_{L^2(I \cap J^*)} \leq B_{\mu} e^{-\beta_{\mu} n},$\label{norm-un-decay}\\
\item $\sup\limits_{x \in I \backslash J^*} |\int_{a_3-\mu}^x u_n(z) dz| \leq \frac{c}{n}.$ \label{int-un-decay}
\end{enumerate}
These properties of the singular functions $u_n$ corresponding to singular values close to zero allow one to estimate the inner products $\langle f,u_n\rangle$. The proof of the first statement can be found in \cite{adk2} for sufficiently large $n$, i.e, $n \geq N_0$ for some $N_0 \in \mathbb{N}$. Since $\|u_n\|_{L^2(I)}=1$ and $N_0$ depends only on $I, J$ and $\mu$, one can easily deduce the existence of constants $B_\mu$, $\beta_\mu$ depending only on $I, J$ and $\mu$ such that (\ref{norm-un-decay}) holds for all $n \in \mathbb{N}$.
Note that we cannot merely apply Lemma \ref{lemma_un} to prove (2), since in the case where $I \intersect J$ is nonempty, the functions $u_n$ behave fundamentally differently at the endpoint $a_3$ of $J$, which lies in $I$.
Thus we prove (2) directly in \S \ref{sec_app_6.2}.

Given any function $g \in W^{1,1}(I)$, we consider the normalization $\tilde{g} = g/\| g\|_{L^2(I)}$, in which case to prove Theorem \ref{thm4} it would suffice to show 
\beq\label{thm4_normalized}
\| H  \tilde{g} \|_{L^2(J)} \geq c_1 \exp (-c_2 |\chi_{I \setminus J^*} \tilde{g}|_{TV}^2),
\eeq 
as long as $\tilde{g}$ satisfies the remaining hypotheses of Theorem \ref{thm4}.

Thus from now on we assume we are working with $f \in W^{1,1}(I)$ and $\|f \|_{L^2(I)}=1$. 
If $f$ vanishes at the boundary of $I \backslash J^*$, we may work directly with $f$. 
Otherwise, if $f$ merely vanishes at least at one point in $I \setminus J^*$, then we may apply a small modification of Lemma \ref{lemma_approx} to approximate $f$ by functions $f_n \in C_c^\infty(I)$ that vanish at the endpoints of $I \setminus J^*$ and such that $\|f_n - f\|_{L^2(I)} \maps 0$ and $|\chi_{I \setminus J^*} f_n|_{TV} \leq 5 |\chi_{I \setminus J^*} f|_{TV}$. Then having proved (\ref{thm4_normalized}) for each $f_n$ we could conclude it holds for $f$, since 
\[ c_1 \exp (-c_2 25|\chi_{I \setminus J^*} f|^2_{TV} ) \leq  \| H f_n\|_{L^2(J)} \leq  \| H f\|_{L^2(J)}  +   \| H (f-f_n)\|_{L^2(J)} ,\]
and $\| H (f-f_n)\|_{L^2(J)} \leq C \|f - f_n\|_{L^2(I)} \maps 0$ as $n \maps \infty$.

Hence, we can assume $\|f\|_{L^2(I)}=1$ and $f$ vanishes at the endpoints of $I \setminus J^*$, so that
\begin{align*}
\left| \int_I f(x) u_n(x) dx \right| &\leq \left| \int_{ I \cap J^*} f(x) u_n(x) dx \right| + \left| \int_{I \backslash J^*} f(x) u_n(x) dx \right| \\
&\leq B_{\mu} e^{-\beta_{\mu} n} + |\chi_{I \backslash J^*} f|_{TV} \sup\limits_{x \in I \backslash J^*} \left| \int_{a_3-\mu}^x u_n(z) dz \right| \\
&\leq B_{\mu} e^{-\beta_{\mu} n} + \frac{c}{n} |\chi_{I \backslash J^*} f|_{TV}. \\
\end{align*}

The remainder of the argument is then similar to the proof of Theorem \ref{thm3}. For any $N \geq 1,$
\begin{align*}
1=\|f\|_{L^2(I)}^{2} &  \leq \sum_{n = -\infty}^{N} \left| \langle f,u_n \rangle \right|^2 + \sum_{n=N+1}^\infty \big( B_{\mu} e^{-\beta_{\mu} n} + \frac{c}{n} |\chi_{I \backslash J^*} f|_{TV} \big)^2 \\
&\leq \sum_{n = -\infty}^{N} \left| \langle f,u_n \rangle \right|^2 +2 B_\mu^2 \sum_{n=N+1}^\infty e^{-2\beta_\mu n} + 2\frac{c^2}{N} |\chi_{I \backslash J^*} f|^2_{TV} \\
&\leq \sum_{n = -\infty}^{N} \left| \langle f,u_n \rangle \right|^2 +2 B_\mu^2 \frac{e^{-2\beta_\mu N}}{e^{2 \beta_\mu}-1} + 2\frac{c^2}{N} |\chi_{I \backslash J^*} f|^2_{TV}.
\end{align*}
Let $\tilde{N}$ be the least integer such that for all $n \geq \tilde{N}$
$$n e^{-2\beta_\mu n} \leq c^2 B_\mu^{-2}\big( e^{2\beta_\mu} - 1 \big)$$
and note that $\tilde{N}$ depends only on $I, J$ and $\mu$. Then, the choice 
\[ N  = \max\{ \tilde{N}, \lceil 4 c^2 (|\chi_{I \backslash J^*} f|_{TV}^2+1)\rceil \} \]
 guarantees that 
the sum $ \sum_{n = -\infty}^{N} \left| \langle f,u_n \rangle \right|^2$ contains at least half of the energy of $f$ and thus
\begin{align*}
\| H_T f \|^2_{L^2(J)} &= \sum_{n=-\infty}^{\infty} \left| \langle f,u_n \rangle \right|^2 \sigma_n^2 \\
&\geq  \sum_{n=-\infty}^{N} \left| \langle f,u_n \rangle \right|^2 \sigma_n^2 \geq \frac{1}{2} \sigma_N^2 \\
& \geq \tilde{k}_0 e^{-k_0 |\chi_{I \backslash J^*}f|^2_{TV}},
\end{align*}
for some constants $k_0, \tilde{k}_0$ depending only on $I$, $J$ and $\mu$.

\section{A remark on generalizations}\label{sec_general}
Let $I, J \subset \mathbb{R}$ be disjoint intervals and let $T:L^2(I) \rightarrow L^2(J)$ be an integral operator of convolution type,
$$ (Tf)(x) = \int_{I}{K(x-y)f(y)dy},$$
for some kernel $K$.
Then we would generically expect an inequality of the type
\begin{equation}\label{inv-statement}
 \| T f\|_{L^2(J)} \geq h\left(\frac{\left| f \right|_{\text{TV}}}{\|f\|_{L^2(I)}}\right)\|f\|_{L^2(I)}
\end{equation}
to hold true,
for some positive function $h:\mathbb{R}_{+} \rightarrow \mathbb{R}_{+}$. The purpose
of this section is to show how to construct examples where the function $h$ depends very strongly
on very fine properties of the kernel $K$. 

\subsection{Our example.} 
For reasons of clarity, we set $I = [0,1]$ and take $K:\mathbb{R} \rightarrow 
\mathbb{R}$ to be a $1$--periodic smooth function. We define the integral operator $T:L^2([0,1]) \rightarrow L^{\infty}(\mathbb{R})$ by
$$ (Tf)(x) = \int_{0}^{1}{K(x-y)f(y)dy}.$$
The function $Tf$ is also periodic with period 1. We will not specify the interval $J$ because it will be irrelevant. 
The main idea is that we can identify
$$Tf = K *f$$
 with a function on the torus $\mathbb{T}$ (normalized to have length 1). Expressing everything in terms of Fourier series yields
\[ \sum_n \widehat{Tf}(n) = \sum_n \hat{K}(n) \hat{f}(n).\]
We now see that if the Fourier coefficients of $K$ and $f$ are supported on disjoint sets of frequencies, then we immediately get $Tf = 0$.
Put differently, the only way to ensure that $Tf \neq 0$ for every $f \neq 0$ is to ensure that $K$ has no vanishing Fourier coefficients.
\begin{lemma}[Folklore]\label{lemma_folklore}
Let $K \in L^2(\mathbb{T})$. Then the span of $\left\{K(x - a): a \in \mathbb{T}\right\}$ is dense in $L^2(\mathbb{T})$ if and only if
$$ \forall~n \in \mathbb{Z}, \qquad \hat{K}(n) \neq 0.$$
\end{lemma}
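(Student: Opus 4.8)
The plan is to pass to the orthogonal complement: since $L^2(\mathbb{T})$ is a Hilbert space, the closed linear span of a family is dense if and only if the only function orthogonal to every member of the family is the zero function. Thus the whole statement reduces to understanding when some $g \in L^2(\mathbb{T})$ can be orthogonal to all translates $K(\cdot - a)$, $a \in \mathbb{T}$. The single elementary fact that drives both directions is the transformation of Fourier coefficients under translation, namely $\widehat{K(\cdot - a)}(n) = \hat{K}(n)\, e^{-2\pi i n a}$, which follows directly from the definition of the Fourier coefficients.

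For the direction that the condition is necessary, I would argue by contraposition. Suppose $\hat{K}(n_0) = 0$ for some $n_0 \in \mathbb{Z}$. By the translation identity, every translate $K(\cdot - a)$ then has vanishing $n_0$-th Fourier coefficient, so the span of the translates, and hence its $L^2$-closure, is contained in the proper closed subspace $V_{n_0} = \{ f \in L^2(\mathbb{T}) : \hat{f}(n_0) = 0\}$. Since the character $x \mapsto e^{2\pi i n_0 x}$ does not lie in $V_{n_0}$, the span cannot be dense, and the necessity follows.

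For sufficiency, suppose $\hat{K}(n) \neq 0$ for every $n$, and let $g \in L^2(\mathbb{T})$ satisfy $\langle K(\cdot - a), g \rangle = 0$ for all $a$. I would introduce the cross-correlation function $\Phi(a) = \int_{\mathbb{T}} K(x-a)\overline{g(x)}\,dx$, which by hypothesis is identically zero. Expanding both $K$ and $\overline{g}$ in Fourier series and integrating term by term in $x$ (justified by Parseval and Fubini for $L^2$ functions on the compact group $\mathbb{T}$), the orthogonality of the characters collapses the double sum and yields $\Phi(a) = \sum_{n} \hat{K}(n)\,\overline{\hat{g}(n)}\, e^{-2\pi i n a}$. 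Hence the $n$-th Fourier coefficient of $\Phi$ equals $\hat{K}(n)\,\overline{\hat{g}(n)}$. As $\Phi \equiv 0$, every such coefficient vanishes, and since $\hat{K}(n) \neq 0$ this forces $\hat{g}(n) = 0$ for all $n$, so $g = 0$. Therefore the orthogonal complement of the span is trivial and the span is dense, completing the proof.

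This argument presents no serious obstacle: it is the standard Wiener-type density criterion for translates, specialized to the compact abelian group $\mathbb{T}$, where the characters simultaneously diagonalize all translations. The only point requiring a modicum of care is the term-by-term evaluation of the Fourier coefficients of the correlation $\Phi$, which is routine once one notes that $\Phi$ is itself an $L^2$ (indeed continuous) function whose coefficients can be read off directly from the convolution structure $\Phi = K * \tilde{g}$, where $\tilde{g}(x) = \overline{g(-x)}$.
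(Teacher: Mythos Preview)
Your proof is correct and follows essentially the same approach as the paper: both directions are handled by passing to the orthogonal complement and invoking Parseval to identify the Fourier coefficients of the correlation $a \mapsto \langle K(\cdot - a), g\rangle$ with $\hat{K}(n)\overline{\hat{g}(n)}$, whence non-vanishing of $\hat{K}(n)$ forces $g=0$. The only cosmetic difference is that the paper translates $g$ rather than $K$ in the inner product, which amounts to a change of variables.
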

\begin{proof}
 One direction is easy: if $\hat{K}(n) = 0$ for some $n \in \mathbb{Z}$, then $e^{i n x}$ serves as a counterexample. As for the other direction, suppose $g \in L^2(\mathbb{T})$
is orthogonal to all translations of $K$. Then, for any $t \in \mathbb{T}$, by Parseval
$$ 0 = \int_{\mathbb{T}}{K(x)g(x-t)dx} = \sum_{n \in \mathbb{Z}}{\hat{K}(n)\overline{e^{int}\hat{g}(n)}} = \sum_{n \in \mathbb{Z}}{\hat{K}(-n)\overline{\hat{g}(-n)}e^{int}}.$$ 
Since $t$ was arbitrary, this means that the Fourier series
$$ \sum_{n \in \mathbb{Z}}{\hat{K}(-n)\overline{\hat{g}(-n)}e^{int}} $$
vanishes identically and since for all $n,$ $\hat{K}(-n) \neq 0$, this implies that $g = 0.$
\end{proof}
Having established this lemma, the proof of an estimate of the type
$$ \| T f\|_{L^2(J)} \geq h\left(\frac{\left| f \right|_{\text{TV}}}{\|f\|_{L^2(I)}}\right)\|f\|_{L^2(I)},$$
for some positive-valued function $h$ is easy. If we take a minimizing sequence $f_{n_k} \in BV(I)$, Helly's compactness theorem implies the existence of a convergent subsequence $f_{n_k} \rightarrow f$ with a pointwise limit $f \in BV(I)$. 
Assuming that $K \in L^2(\mathbb{T})$ has $\hat{K}(n) \neq 0$ for all $n$, Lemma \ref{lemma_folklore} implies that the translates of $K$ are dense in $L^2(I)$. Then, however, it is impossible for the operator $T$ to map $f$ to 0 and this proves the statement.

\subsection{Conclusion.} In order for an inequality of
the type 
$$ \| T f\|_{L^2(J)} \geq h\left(\frac{\left| f \right|_{\text{TV}}}{\|f\|_{L^2(I)}}\right)\|f\|_{L^2(I)}$$
to hold true at all, fine properties of the Fourier coefficients of the kernel play a crucial role. Furthermore, even assuming such an inequality to be true, the quantitative rate of decay of $h$ will directly depend on
the speed with which the Fourier coefficients decay to 0: it is thus possible to construct explicit examples
of kernels $K$ for which the associated function $h$ decays faster than any arbitrary given function.
These are very serious obstructions for any generalized theory of bounding truncated integral operators
from below if one were to hope that such a theory could be stated in `rough' terms (i.e. smoothness
of the function, $L^p$--norms of the kernel $K$ and its derivatives). In the example above, bounding
Fourier coefficients $\hat{K}(n)$ from below seems unavoidable. 

\section{Appendix}

\subsection{Proof of Lemma \ref{lemma_approx}}\label{sec_lemma1}

\begin{proof}
A function $f \in BV(I)$ can be approximated by smooth functions in the following way 
\cite[Section 3.1]{ambrosio}: There exists a sequence $\{ f_n \} \in C^{\infty}(I) \cap BV(I)$ such that
\begin{align}
\| f_n - f \|_{L^1(I)} \to 0, \label{l1-conv}\\
|f_n|_{TV} \to |f|_{TV}. \label{smooth-appr-tv}
\end{align}

We are seeking an approximation by smooth functions that vanish at the boundary of $I$. Since $C_c^{\infty}(I) \subset BV(I)$ is dense in $L^1(I)$, one can also find a sequence $f_n \in C_c^{\infty}(I),$ that satisfies (\ref{l1-conv}). Now instead of \eqref{smooth-appr-tv}, we use the fact that $f(x_0)=0$ for some $x_0 \in I$ 
to note that $\|f \|_{L^\infty(I)} \leq |f|_{TV}$, and so instead of (\ref{smooth-appr-tv}) we now have 
\begin{equation}
\label{ff_TV}
 \left| f_n \right|_{TV} \leq \left| f \right|_{TV} + 2\|f\|_{L^\infty(I)} \leq 3 | f |_{TV}. 
\end{equation}
Finally, $L^2$--convergence can be obtained as follows by noting that $\{f_n\}$ is uniformly bounded.
Indeed, suppose there exists a subsequence $\{f_{n_k} \}$ such that $\|f_{n_k} \|_{L^{\infty}(I)} > 3 \left| f \right|_{TV} + \varepsilon$ for some small $\varepsilon >0.$ Then, each $f_{n_k}$ does not change sign. For supposing that it did, we would have 
$$ |f_{n_k}|_{TV} \geq  \|f_{n_k} \|_{L^\infty(I)} > 3| f|_{TV} + \ep,$$
which contradicts (\ref{ff_TV}).

 Thus we may assume w.l.o.g. $f_{n_k} \geq0,$ in which case we see that for each $x \in I$, $$\|f_{n_k}\|_{L^\infty(I)} - f_{n_k}(x) \leq |f_{n_k}|_{TV}.$$ 

 This yields

$$0 <\ep< \| f_{n_k} \|_{L^{\infty}(I)} - 3 \left| f \right|_{TV}  \leq \| f_{n_k} \|_{L^{\infty}(I)} -  \left| f \right|_{TV}\leq f_{n_k}(x), \quad \forall x \in I.$$

Furthermore,
$$\int_I ( \| f_{n_k} \|_{L^{\infty}(I)} - 3 \left| f \right|_{TV}) dx \leq \|f_{n_k} \|_{L^1(I)} \leq 2 \| f\|_{L^1(I)},$$
which results in the uniform bound $\| f_{n_k} \|_{L^{\infty}(I)} \leq 2 \|f\|_{L^1(I)}/|I|+ 3 \left| f \right|_{TV}.$
Since $L^1$--convergence in \eqref{l1-conv} implies the existence of a subsequence $\{ f_{n_k}\}$ of $\{ f_n\}$ such that $f_{n_k} \xrightarrow{\text{pw}} f$ almost everywhere, the dominated convergence theorem results in $$\| f_{n_k} - f \|_{L^2(I)} \to 0.$$
\end{proof}

\subsection{Proof of Lemma \ref{int-un}}\label{sec_int-un}

\begin{proof} Here we will prove the statement
\begin{equation*}
\forall~x\in (a_3, a_4): \qquad  \left| \int_{a_3}^{x}{u_n(z)dz}\right| \leq \frac{c}{n},
\end{equation*}
where $u_n$ is the $n$-th eigenfunction of $L_{I}$ with associated eigenvalue $\lambda_n$. 
We recall we are in the case where $I$ and $J$ are disjoint, with $I=(a_3,a_4)$. We choose $N_0 \in \mathbb{N}$ (depending only on $I$ and $J$) such that the asymptotic form of $u_n$ in \cite{kat3} is valid for all $n \geq N_0$. We first show the result for $n \geq N_0$. For this, we note that on $(a_3,a_4)$ and away from the points $a_3$ and $a_4$, the function $u_n$ can be approximated by the Wentzel--Kramers--Brillouin (WKB) solution. More precisely, defining $\varepsilon = \varepsilon_n := 1/\sqrt{\lambda_n}$, it is true that for any sufficiently small $\delta >0$, the representation of $u_n$ in the form
\begin{multline*}
u_n(z) = \frac{K}{(-P(z))^{1/4}} \left[ \cos \Big( \frac{1}{\varepsilon} \int_{a_3}^z \frac{dt}{\sqrt{-P(t)}} -\frac{\pi}{4} \Big) \cdot (1+ \mathcal{O}(\varepsilon^{1/2-\delta})) \right. \\
	\left. +   \sin \Big( \frac{1}{\varepsilon} \int_{a_3}^z \frac{dt}{\sqrt{-P(t)}} -\frac{\pi}{4} \Big) \cdot \mathcal{O}(\varepsilon^{1/2-\delta}) \right] \nonumber
\end{multline*}
is valid for $z \in [a_3 + \mathcal{O}(\varepsilon^{1+2\delta}), a_4 - \mathcal{O}(\varepsilon^{1+2\delta})]$ and some positive constant $K$ depending only on $a_1,a_2,a_3,a_4$. Having this, we start by estimating 
\begin{equation*}
\left| \int_{a_3+ \mathcal{O}(\varepsilon^{1+2\delta})}^{x}{u_n(z)dz}\right|
\end{equation*}
for $x \in [a_3 + \mathcal{O}(\varepsilon^{1+2\delta}), a_4 - \mathcal{O}(\varepsilon^{1+2\delta})]$. We do this by first introducing $\tilde{u}_n(z) = (-P(z))^{-1/4} u_n(z)$, for which
\begin{align*}
\int_{a_3+ \mathcal{O}(\varepsilon^{1+2\delta})}^{x}{\tilde{u}_n(z)dz} =&K \varepsilon \Big[ \sin \Big( \frac{1}{\varepsilon} \int_{a_3}^x \frac{dt}{\sqrt{-P(t)}} -\frac{\pi}{4} \Big) \\
&\quad - \sin \Big( \frac{1}{\varepsilon} \int_{a_3}^{a_3+ \mathcal{O}(\varepsilon^{1+2\delta})} \frac{dt}{\sqrt{-P(t)}} -\frac{\pi}{4} \Big) \Big] \cdot (1+ \mathcal{O}(\varepsilon^{1/2-\delta})) \\
&- K \varepsilon \Big[ \cos \Big( \frac{1}{\varepsilon} \int_{a_3}^x \frac{dt}{\sqrt{-P(t)}} -\frac{\pi}{4} \Big) \\
&\quad - \cos \Big( \frac{1}{\varepsilon} \int_{a_3}^{a_3+ \mathcal{O}(\varepsilon^{1+2\delta})} \frac{dt}{\sqrt{-P(t)}} -\frac{\pi}{4} \Big) \Big] \cdot \mathcal{O}(\varepsilon^{1/2-\delta})
\end{align*}
and hence
$$\left| \int_{a_3+ \mathcal{O}(\varepsilon^{1+2\delta})}^{x}{\tilde{u}_n(z)dz}\right|  \leq K \varepsilon (1+ \mathcal{O}(\varepsilon^{1/2-\delta})).$$
It is known from the asymptotics derived in \cite{kat3} that 
\beq\label{ep_n}
\varepsilon =~ \varepsilon_n = \frac{2}{K^2 n \pi} + \mathcal{O}(n^{-1/2+\delta}).
\eeq
Thus, there exists a constant $\tilde{c}_1$ depending only on $a_1, a_2,a_3, a_4$ such that
\begin{equation*}
\left| \int_{a_3+ \mathcal{O}(\varepsilon^{1+2\delta})}^{x}{\tilde{u}_n(z)dz}\right| \leq \frac{\tilde{c}_1}{n}.
\end{equation*}
We can use this together with integration by parts to find an upper bound on the above expression with $\tilde{u}_n$ replaced by $u_n$:
\begin{align*}
\int_{a_3+ \mathcal{O}(\varepsilon^{1+2\delta})}^{x} u_n(z) dz &= \int_{a_3+ \mathcal{O}(\varepsilon^{1+2\delta})}^{x} (-P(z))^{1/4} \tilde{u}_n(z) dz \\
&= - \int_{a_3+ \mathcal{O}(\varepsilon^{1+2\delta})}^{x} \frac{d}{dz}  (-P(z))^{1/4} \int_{a_3+ \mathcal{O}(\varepsilon^{1+2\delta})}^{z}  \tilde{u}_n(t) dt dz + \\
&+ \left. \Big( (-P(z))^{1/4} \int_{a_3+ \mathcal{O}(\varepsilon^{1+2\delta})}^{z}  \tilde{u}_n(t) dt \Big) \right|_{a_3+ \mathcal{O}(\varepsilon^{1+2\delta})}^x.
\end{align*}
This gives
\begin{align*}
&\left| \int_{a_3+ \mathcal{O}(\varepsilon^{1+2\delta})}^{x} u_n(z) dz \right| \\
&\ \ \leq \sup\limits_{z \in [a_3+\mathcal{O}(\varepsilon^{1+2\delta}),x]} \left|  \int_{a_3+ \mathcal{O}(\varepsilon^{1+2\delta})}^{z}  \tilde{u}_n(t) dt \right| \cdot \int_{a_3+\mathcal{O}(\varepsilon^{1+2\delta})}^x \left| \frac{d}{dz} (-P(z))^{1/4} \right| dz \\
&\quad \quad + \left| P(x) \right|^{1/4} \cdot \left| \int_{a_3+\mathcal{O}(\varepsilon^{1+2\delta})}^x \tilde{u}_n(t) dt \right| \leq \frac{c_1}{n},
\end{align*}
for some constant $c_1$ that depends only on the points $a_i$. Here we have used that $\frac{d}{dz} (-P(z))^{1/4}$ changes sign exactly once within $(a_3,a_4)$ and hence
$$\int_{a_3+\mo(\varepsilon^{1+2\delta})}^x \left| \frac{d}{dz} (-P(z))^{1/4} \right| dz \leq \sup_{a_3 \leq x \leq a_4}4(-P(x))^{1/4}.$$

What remains to be shown is the estimate for the contributions close to the points $a_3$ and $a_4$. Since (by the definition of the operator $L_I$) the asymptotic behavior of $u_n$ at $a_4$ is identical to its behavior at $a_3$, it suffices to find an upper bound on $$\left| \int_{a_3}^{x} u_n(z) dz \right|, \quad x \in (a_3,a_3+\mathcal{O}(\varepsilon^{1+2\delta})]. $$
On this interval, $(a_3,a_3+\mathcal{O}(\varepsilon^{1+2\delta})]$, the eigenfunctions $u_n$ can be approximated by the Bessel function $J_0$. (This approximation is specific to the case where $I$ and $J$ are disjoint.) For this, we define the variable $t=(a_3-z)/(\varepsilon^2 P'(a_3))$. Then, the asymptotic behavior of $u_n$ has been found to be
\begin{equation*}
u_n(z) = \begin{cases} b_3 [J_0(2\sqrt{t})+\mathcal{O}(\varepsilon^{1-2\delta/3})], & \mbox{for } t \in [0,1) \\ b_3 [J_0(2\sqrt{t})+t^{-1/4} \mathcal{O}(\varepsilon^{1-2\delta/3})], & \mbox{for } t \in [1,\mathcal{O}(\varepsilon^{2\delta-1})] \end{cases}
\end{equation*}
with a constant $b_3=\mathcal{O}(\varepsilon^{-1/2})$. A change of variables $dx = - \varepsilon^2 P'(a_3) dt$ and $t(x)=\frac{a_3-x}{\e^2 P'(a_3)} = \mo(\e^{2\delta-1})$ then yield
\begin{align*}
\int_{a_3}^{x} u_n(z) dz =& b_3 \cdot \Big\{ \int_0^1 \big[ J_0(2\sqrt{t}) + \mathcal{O}(\varepsilon^{1-2\delta/3}) \big] \varepsilon^2 (-P'(a_3)) dt \\
&+ \int_1^{\mathcal{O}(\varepsilon^{2\delta-1})} \big[ J_0(2\sqrt{t}) + t^{-1/4} \mathcal{O}(\varepsilon^{1-2\delta/3}) \big] \varepsilon^2 (-P'(a_3)) dt \Big\} \\ 
=& \mo(\e^{3/2}) \cdot \Big\{ \int_0^1 \big[ J_0(2\sqrt{t}) + \mathcal{O}(\varepsilon^{1-2\delta/3}) \big] dt \\
&+ \int_1^{\mathcal{O}(\varepsilon^{2\delta-1})} \big[ J_0(2\sqrt{t}) + t^{-1/4} \mathcal{O}(\varepsilon^{1-2\delta/3}) \big]  dt \Big\}.
\end{align*}
The first integral in the above sum is bounded, thus
\begin{equation*}
\int_{a_3}^{x} u_n(z) dz = \mo(\e^{3/2}) + \mo(\e^{3/2}) \cdot  \int_1^{\mathcal{O}(\varepsilon^{2\delta-1})} \big[ J_0(2\sqrt{t}) + t^{-1/4} \mathcal{O}(\varepsilon^{1-2\delta/3}) \big]  dt.
\end{equation*}
To find an upper bound on the remaining integral, we first estimate it by
\begin{align}
&\left|  \int_1^{\mathcal{O}(\varepsilon^{2\delta-1})} \big[ J_0(2\sqrt{t}) + t^{-1/4} \mathcal{O}(\varepsilon^{1-2\delta/3}) \big]  dt \right| \nonumber \\
&\qquad \qquad = \left| \int_1^{\mathcal{O}(\varepsilon^{2\delta-1})}  J_0(2\sqrt{t}) dt + t^{3/4} \mathcal{O}(\varepsilon^{1-2\delta/3}) \Big\vert_1^{\mo(\e^{2\delta-1})}  \right| \nonumber \\
& \qquad \qquad \leq \left| \int_1^{\mathcal{O}(\varepsilon^{2\delta-1})} J_0(2\sqrt{t}) dt \right| + \mo(\e^{1/4+5\delta/6}). \label{int-J0-1}
\end{align}
Next, we make use of the asymptotic form of $J_0$ for $t \to \infty$:
\begin{equation}\label{int-J0-2}
J_0(2\sqrt{t}) = \frac{1}{\sqrt{\pi} t^{1/4}} \big[ \cos(2\sqrt{t}-\frac{\pi}{4}) + \mo(t^{-1/2})\big].
\end{equation}
For some fixed, sufficiently large $T$, we can write
\begin{align}
&\left| \int_1^{\mathcal{O}(\varepsilon^{2\delta-1})} J_0(2\sqrt{t}) dt \right| \nonumber \\
&\qquad \leq \left| \int_1^T J_0(2\sqrt{t}) dt \right|+ \left| \int_T^{\mathcal{O}(\varepsilon^{2\delta-1})} \big[ \frac{1}{\sqrt{\pi} t^{1/4}} \cos(2\sqrt{t} - \frac{\pi}{4}) + \mo(t^{-3/4}) \big] dt \right| \nonumber \\
&\qquad \leq \tilde{\tilde{c}}_2 + \left| \frac{1}{\sqrt{2 \pi}}t^{1/4} \big[ - \cos(2\sqrt{t}) + \sin(2\sqrt{t}) \big] \Big\vert_T^{\mathcal{O}(\varepsilon^{2\delta-1})} \right| + \mo(\e^{-1/4+\delta/2}) \nonumber \\
& \qquad \leq \tilde{c}_2 + \mo(\e^{-1/4+\delta/2}), \label{int-J0-3}
\end{align}
for some constants $\tilde{c}_2, \tilde{\tilde{c}}_2$, where the second inequality is obtained by explicit evaluation in Mathematica.

This yields
$$\left| \int_{a_3}^{x} u_n(z) dz \right| \leq \mo(\e^{3/2})+ \mo(\e^{3/2} \cdot \e^{-1/4+\delta/2}) + \mo(\e^{3/2} \cdot \e^{1/4+5\delta/6}) = \mo(\e^{5/4+\delta/2}),$$ 
where we have recalled from (\ref{ep_n}) that for sufficiently large $n$, $\ep = \ep_n < 1$.
Consequently, this integral decays at least as fast as $\mo(n^{-1})$, and we may conclude that there exists a constant $c_2$ such that
\begin{equation}\label{decay-J0-part}
\left| \int_{a_3}^{x} u_n(z) dz \right| \leq \frac{c_2}{n}, \quad x \in (a_3,a_3+\mo(\e^{1+2\delta})].
\end{equation}
Altogether, this implies the existence of a constant $\tilde{c}$ depending only on $I$ and $J$ for which
$$\left| \int_{a_3}^x u_n(z) dz \right| \leq \frac{\tilde{c}}{n}, \quad x \in (a_3,a_4),$$
given that $n \geq N_0$. Trivially, however, the following upper bound can be derived for $n < N_0$ by noting that $\|u_n\|_{L^2(I)}=1$:
$$\left| \int_{a_3}^x u_n(z) dz \right| \leq \int_{a_3}^{a_4}  \left|u_n(z) \right| dz \leq C \leq \frac{C N_0}{n}$$
for $C = (a_4-a_3)^{1/2}$. Thus, with the choice $c = \max \{ \tilde{c},C N_0 \}$ the assertion holds for all $n \in \mathbb{N}$. 

\end{proof}

\subsection{Proof of Relation \eqref{int-un-decay} in \S \ref{sub:proof-thm4}}\label{sec_app_6.2}
Here, we recall that we are considering Case 4, with $I=(a_2,a_4)$ and $J=(a_1,a_3)$ overlapping intervals with $a_1< a_2<a_3<a_4$, and $\mu>0$ is fixed so that $a_2< a_3 - \mu$. We will expand the above argument for bounding integrals of $u_n$ to this case with overlap, As a consequence of the fact that $\sigma_n \to 0$ (or equivalently $\lambda_n \to +\infty$), we will prove that for all $x \in [a_3-\mu,a_4]$,
$$\left| \int_{a_3-\mu}^x u_n(z) dz \right| \leq \frac{c}{n}.$$
As before, we define $\ep=\varepsilon_n = 1/\sqrt{\lambda_n}$ and omit the index. For sufficiently large $n$, the WKB approximation is valid on $[a_3-\mu, a_3-\mo(\e^{1+2\delta})]$ and is given by
$$u_n(z) = \frac{K}{(P(z))^{1/4}} e^{-\frac{1}{\e} \int_z^{a_3} \frac{dt}{\sqrt{P(t)}}} \cdot \big( 1+ \mo(\e^{1/2-\delta})\big),$$ for the same constant $K$ as in \S \ref{sec_int-un}. With this pointwise decay of $u_n$ that is exponential in $n$, one easily sees that for $x \in [a_3-\mu, a_3-\mo(\e^{1+2\delta})]$ the integral $|\int_{a_3-\mu}^x u_n(z) dz|$ decays faster than $\mo(1/n)$.

Next, we consider $x \in [a_3-\mo(\e^{1+2\delta}),a_4]$. We distinguish three different cases into which we can split the integrals as follows:
for $ x \in [a_3-\mo(\e^{1+2\delta}),a_3],$
\[\left| \int_{a_3-\mu}^x u_n(z) dz \right|  \leq \left| \int_{a_3-\mu}^{a_3-\mo(\e^{1+2\delta})} u_n(z) dz \right| + \left| \int_{a_3-\mo(\e^{1+2\delta})}^{x} u_n(z) dz \right|;\]
for $ x \in [a_3,a_3+\mo(\e^{1+2\delta})],$
\[
\left| \int_{a_3-\mu}^x u_n(z) dz \right|  \leq \left| \int_{a_3-\mu}^{a_3 - \mo(\ep^{1+2\del})} u_n(z) dz \right| + \left| \int_{a_3 - \mo(\ep^{1+2\del})}^{a_3} u_n(z) dz \right| + \left| \int_{a_3}^{x} u_n(z) dz \right|;\]
and for $x \in [a_3+\mo(\e^{1+2\delta}),a_4],$
\begin{multline*}
\left| \int_{a_3-\mu}^x u_n(z) dz \right|  \leq \left| \int_{a_3-\mu}^{a_3-\mo(\e^{1+2\delta})} u_n(z) dz \right|  \\
 + 2 \left| \int_{a_3}^{a_3+\mo(\e^{1+2\delta})} u_n(z) dz \right| 
+ \left| \int_{a_3+\mo(\e^{1+2\delta})}^{x} u_n(z) dz \right|.
\end{multline*}
The last inequality relies on a property of the singular functions $u_n$ that is referred to as \textit{transmission conditions} (see \cite{reema1} for details). Roughly, it states that the parts of $u_n$ on regions of size $\mo(\e^{1+2\delta})$ from the left and from the right of the point of singularity $a_3$ are the same as they approach the limit to $a_3$.

If we let $A$ represent an integral over an interval at least $\mo(\ep^{1+2\del})$ away from the left of $a_3$, $B$ represent an integral within an $\mo(\ep^{1+2\del})$ neighborhood to the left or right of $a_3$ (the transmission conditions ensure the left-hand and right-hand cases are equivalent), and $C$ represent an integral over an interval at least $\mo(\ep^{1+2\del})$ away from the right of $a_3$, we see that the right hand sides of the above three inequalities take the form $A + B$, $A + B + B$, and $A + 2B + C$, respectively.

Integrals of the form $A$ decay at least to order $\mo(1/n)$, as remarked above. Integrals of the form $C$ may be shown to decay to order $\mo(1/n)$ by the argument of Section \ref{sec_int-un}, since the behavior of $u_n$ away from $a_3$ is independent of whether $I$ and $J$ intersect.

What remains is to treat the case of integrals of the form $B$, that is, to show that for $x \in (a_3,a_3+\mo(\e^{1+2\delta})]$,
$$ \left| \int_{a_3}^{x} u_n(z) dz \right| \leq \frac{\tilde{c}}{n}, $$
for some $\tilde{c} >0$. For this, we can proceed in a similar fashion as in \S \ref{sec_int-un}, with the key change that where in \S \ref{sec_int-un} we used an approximation of $u_n$ by the Bessel function $J_0$ on this region, now, in the case of overlapping intervals $I$ and $J$, $u_n$ is no longer a bounded function close to $a_3$, but can be approximated by a linear combination of the Bessel functions $J_0$ and $Y_0$.
 More precisely, substituting $t=(a_3-z)/(\varepsilon^2 P'(a_3))$ yields,
\begin{equation*}
u_n(z) = \begin{cases} b_3 [J_0(2\sqrt{t})+\mathcal{O}(\varepsilon^{1-2\delta/3})] + c_3 [Y_0(2\sqrt{t})+\mathcal{O}(\varepsilon^{3/2-\delta/3})], t \in [0,1) \\ 
b_3 [J_0(2\sqrt{t})+t^{-1/4} \mathcal{O}(\varepsilon^{1-2\delta/3})]\\
\quad + c_3 [Y_0(2\sqrt{t})+t^{-1/4} \mathcal{O}(\varepsilon^{1-2\delta/3})], t \in [1,\mathcal{O}(\varepsilon^{2\delta-1})] \end{cases}
\end{equation*}
with constants $b_3=\mathcal{O}(\varepsilon^{-\delta})$ and $c_3 = \mo(\e^{-1/2})$. As before, with a change of variables $dx = - \varepsilon^2 P'(a_3) dt$, we obtain
\begin{align*}
\int_{a_3}^{x} u_n(z) dz =& b_3 \varepsilon^2 (-P'(a_3)) \cdot \Big\{ \int_0^1 \big[ J_0(2\sqrt{t}) + \mathcal{O}(\varepsilon^{1-2\delta/3}) \big]  dt \\
&+ \int_1^{\mathcal{O}(\varepsilon^{2\delta-1})} \big[ J_0(2\sqrt{t}) + t^{-1/4} \mathcal{O}(\varepsilon^{1-2\delta/3}) \big] dt \Big\} \\ 
&+c_3 \varepsilon^2 (-P'(a_3)) \cdot \Big\{ \int_0^1 \big[ Y_0(2\sqrt{t}) + \mathcal{O}(\varepsilon^{3/2-\delta/3}) \big]  dt \\
&+ \int_1^{\mathcal{O}(\varepsilon^{2\delta-1})} \big[ Y_0(2\sqrt{t}) + t^{-1/4} \mathcal{O}(\varepsilon^{1-2\delta/3}) \big] dt \Big\}.
\end{align*}
Using the results from the proof in \S \ref{sec_int-un} for the terms involving $J_0$, this simplifies to
\begin{align*}
\left| \int_{a_3}^{a_3+\mathcal{O}(\varepsilon^{1+2\delta})} u_n(z) dz \right|  \leq  \mo(\e^{2-\delta}) & + \mo(\e^{3/2}) \cdot \Big\{ \left| \int_0^1 \big[ Y_0(2\sqrt{t}) + \mathcal{O}(\varepsilon^{3/2-\delta/3}) \big] dt \right| \\
& + \left| \int_1^{\mathcal{O}(\varepsilon^{2\delta-1})} \big[ Y_0(2\sqrt{t}) + t^{-1/4} \mathcal{O}(\varepsilon^{1-2\delta/3}) \big] dt \right| \Big\}.
\end{align*}
The first integral on the right-hand side of the above is bounded, since for small arguments $z$, $Y_0(z) \sim \frac{2}{\pi} \ln (z)$. For the second integral, the same argument as for $J_0$ in \eqref{int-J0-1}--\eqref{int-J0-3} holds, but upon replacing the asymptotic form \eqref{int-J0-2} by 
$$Y_0(2\sqrt{t}) = \frac{1}{\sqrt{\pi} t^{1/4}}  \big[ \sin(2\sqrt{t}-\frac{\pi}{4}) + \mo(t^{-1/2})\big].$$
This then allows us to state that 
$$\left| \int_{a_3}^x u_n(z) dz \right| \leq \frac{\tilde{c}}{n}, \quad \forall x \in [a_3,a_3+\mo(\e^{1+2\delta})]$$
and consequently, that for all $x \in [a_3-\mu, a_4]$, 
$$\left| \int_{a_3-\mu}^x u_n(z) dz \right| \leq \frac{c}{n}.$$

\bigskip
\textbf{Acknowledgment.} We are grateful to Angkana R\"uland, Ingrid Daubechies, Michel Defrise, Herbert Koch and Christoph Thiele for valuable comments. The first author was supported by a fellowship of the Research Foundation Flanders (FWO), the second author is supported in part by NSF grant DMS-1402121 and the third author was supported by a Hausdorff scholarship of the
Bonn International Graduate School and the SFB Project 1060 of the DFG.

\end{document}